\documentclass[10pt,reqno]{amsart}
\usepackage[margin=1.45in]{geometry}
\usepackage{amsmath,amsfonts,amsthm,amssymb,enumerate,graphicx}
\usepackage{tikz,tikz-cd,url,mathtools}
\usepackage{hyperref}

\let\div\relax
\DeclareMathOperator\div{div}

\newcommand\dd[0]{\partial}
\newcommand\grad[0]{\nabla}
\newcommand{\eps}{\frac{1}{100}}
\newcommand\eq[1]{\begin{align}{#1}\end{align}}
\newcommand\eqn[1]{\begin{align*}{#1}\end{align*}}

\theoremstyle{plain}
\newtheorem{thm}{Theorem}[section]
\newtheorem{lem}[thm]{Lemma}

\newtheorem{prop}[thm]{Proposition}

\theoremstyle{definition}
\newtheorem{define}[thm]{Definition}

\theoremstyle{remark}
\newtheorem{remark}[thm]{Remark}

\numberwithin{equation}{section}

\title[Non-uniqueness for the dyadic Navier--Stokes]{Non-uniqueness in the Leray--Hopf class for a dyadic Navier--Stokes model}

\author{Stan Palasek}
\address{School of Mathematics, Institute for Advanced Study, 1 Einstein Dr., Princeton, NJ, 08540, USA}
\email{palasek@ias.edu}

\begin{document}

\begin{abstract}
    The uniqueness of Leray--Hopf solutions to the incompressible Navier--Stokes equations remains a significant open question in fluid mechanics. This paper proposes a potential mechanism for non-uniqueness, illustrated in a natural dyadic shell model. We show that, for the Obukhov model with $d>2$, there exist initial data at the critical regularity that give rise to two distinct Leray--Hopf solutions. These solutions exhibit an approximately discretely self-similar structure, with non-uniqueness resulting from a partial breaking of the scaling symmetry. The fundamental observation is that, in a certain scenario, the dynamics reduce to a sequence of weakly coupled finite-dimensional systems. Moreover, the predominant nonlinear interactions are identical to those arising in convex integration, suggesting the possibility of a similar construction in the full PDE setting.
\end{abstract}

\maketitle

\section{Introduction}

Consider the initial value problem for the incompressible Navier--Stokes equations,
\begin{equation}\begin{aligned}\label{navierstokes}
\dd_tu-\nu\Delta u+\mathbb P\div u\otimes u=0\\
u(0,x)=u^0(x)
\end{aligned}\end{equation}
where $u:\Omega\to\mathbb R^d$ is an unknown vector field, $\nu>0$ is the viscosity parameter, $\mathbb P$ is the Leray projection onto the space of divergence-free vector fields, and $u^0$ is some divergence-free initial data. In this paper one should consider the spatial domain to be $\Omega=\mathbb R^d$ or $\mathbb T^d$, with $d\geq3$.

A central challenge in the field of mathematical fluid mechanics is to understand whether, for any reasonable class of solutions, the system \eqref{navierstokes} is well-posed globally in time. This question can be approached from two angles: classical/mild solutions, for which uniqueness and regularity are well-established but global existence is unknown \cite{leray,fujitakato}; and weak (Leray--Hopf) solutions, for which global existence is known but uniqueness and regularity are open. In this paper we are concerned with the second approach, where numerical evidence \cite{jiasverak,guillodsverak} has recently emerged to suggest that uniqueness fails for data at the critical regularity. Despite a great deal of interesting partial results (see Section~\ref{navierstokespdenonuniquenesssection} for a survey), a rigorous proof of non-uniqueness in the Leray--Hopf class in the absence of an external force remains out of reach.

The purpose of this paper is to propose a new non-uniqueness mechanism for Leray--Hopf solutions that might be amenable to a rigorous construction in the setting of the Navier--Stokes equations; see Section~\ref{PDEpropsects} for some considerations in this direction. To make the idea concrete and prove a rigorous result, we restrict ourselves to the dyadic setting in which the PDE is replaced by an infinite-dimensional ODE system representing how the fluid's energy flows through Fourier space. Even for this simplified problem, non-uniqueness of unforced Leray--Hopf solutions for Navier--Stokes-type equations has remained open.

Our main theorem pertains to the following model of the Navier--Stokes equations: for an unknown $u:[0,\infty)\to\mathbb R^I$,
\begin{equation}\label{system}
    \begin{aligned}
        \dd_tu_k+\nu N_k^2u_k+B_k[u,u]&=0\\u_k(0)&=u_k^0
        \end{aligned}\quad\quad\quad \forall k\in I
\end{equation}
where $N_k=\lambda^kN_0$ ($\lambda>1$) is a sequence of frequency scales and the nonlinearity is
\eqn{
B_k[u,u]=-N_{k-1}^{\alpha}u_{k-1}u_k+N_{k}^\alpha u_{k+1}^2.
}
By rescaling, we normalize the viscosity $\nu=1$. Unless otherwise noted, we take the index set to be $I=\mathbb N$ which models the Navier--Stokes on $\Omega=\mathbb T^d$. In this case one closes the system by fixing $u_{-1}=0$. At times we will also refer to \eqref{system} with $I=\mathbb Z$ which models the Navier--Stokes on $\mathbb R^d$. An additive group structure on $I$ is necessary to discuss exact scale invariance.\footnote{Transferring the non-uniqueness scenario from $I=\mathbb Z$ to $\mathbb N$ is analogous to the idea of truncating a self-similar solution of a PDE which is a standard idea (e.g., \cite{elgindiII}). In this work the truncation is possible not by stability (because the non-uniqueness is not stable), but rather because the different frequency shells are weakly coupled together, so the self-similarity can be interrupted at the low modes without difficulty.}

The nonlinearity $B_k$ was first formulated by Obukhov~\cite{obukhov} as a model of turbulence in the atmosphere. Alternately, in the language of Tao's quadratic circuits~\cite{taoaveraged}, it represents two amplifier gates chained together. It is one of the two\footnote{The other is the Desnyansky--Novikov~\cite{dn}, also known as Katz--Pavlovi\'c~\cite{katzpavlovicfinitetimeblowup} model. It appears likely that the mechanism described here can be used to prove a similar theorem for that model; however, for the reasons described in Section~\ref{PDEpropsects}, the result may not be as transferable to the Navier--Stokes equations.} fundamental dyadic nonlinearities involving nearest neighbor interactions and we will argue that it is the more natural one for searching for non-uniqueness. Some motivation comes from convex integration schemes for fluid equations, where the two most important interactions correspond to the two terms in $B_k$ (see Section~\ref{PDEpropsects} for details).

The choice of the nonlinearity and meaning of the parameter $\alpha$ is important and subtle, but we defer the full discussion to Section~\ref{dyadicmodelsection}. For now let us just remark that in spatial dimension $d$, the range of physical parameters is $\alpha\in[1,\frac d2+1]$, with the most popular choices being $\alpha=1$ (homogeneous turbulence, any dimension) and $\alpha=\frac52$ ($O(1)$ eddies per scale in dimension three). Which value occurs in turbulence in nature is not a settled question in the physics literature~\cite{iyer2020scaling}.

The nonlinearity $B$ shares two fundamental properties with that of the true Navier--Stokes equations \eqref{navierstokes}: first, it formally conserves energy:
\eqn{
\frac d{dt}\sum_{k\in I}\frac{u_k^2}2+\sum_{k\in I}N_k^2u_k^2=0
}
which can be (formally) derived from \eqref{system} by a telescoping series. Second, with the index set $I=\mathbb Z$, it possesses the group\footnote{This is a discrete subgroup of the familiar 1-parameter group of scaling symmetries for the Navier--Stokes on $\mathbb R^d$ when one associates the left shift operator with the transformation $f(x)\mapsto f(\lambda^{-1}x)$.} of scaling symmetries
\eq{\label{scalinginvariance}
u(t)\mapsto\lambda^{(\alpha-2)n}L^nu(\lambda^{-2n}t),\quad n\in\mathbb Z
}
where $L$ is the left shift operator $(\ldots,a_{-1},a_0,a_1,\ldots)\mapsto(\ldots,a_0,a_1,a_2,\ldots)$. From here it is easy to see that $L^\infty([0,T];H^{\alpha-2})$ is a critical space (c.f.\ the case $\alpha=\frac d2+1$ which reduces to the familiar critical space $H^{\frac d2-1}$ for the Navier--Stokes equations). Moreover, observe that the energy is supercritical---and thus we expect various forms of ill-posedness---when $\alpha>2$.

Let us emphasize that $u_k$ represents not the velocity but more precisely the $L^2$ mass of the velocity in the $k$th frequency shell. This explains the factor $\lambda^{\alpha-2}$ in \eqref{scalinginvariance} instead of the more familiar $\lambda^{-1}$.

In order to state the main theorem, we need the following definition.
\begin{define}
For any initial data $u_0\in L^2$, we say that a solution $u(t)$ of \eqref{system} on $[0,T]$ is \emph{Leray--Hopf} if it belongs to the space\footnote{See Section~\ref{notationsection} for the definition of $H^s$ and $L^2$ for sequences.} $L_t^\infty L^2\cap L_t^2H^1$ and obeys the energy inequality
\eq{\label{energyinequality}
\sum_{k\in I}\frac{u_k^2(t)}2+\int_0^t\sum_{k\in I}N_k^2u_k^2(s)ds\leq\sum_{k\in I}\frac{(u_k^0)^2}2
}
for all $t\in[0,T]$.
\end{define}

A well-known argument based on taking a weak limit of solutions of the Galerkin-truncated system shows that for every $L^2$ initial data, there exists a global-in-time Leray--Hopf solution for \eqref{system}. On the other hand, to our knowledge, the uniqueness of these solutions (without forcing) for \eqref{system} or any other dyadic model of the Navier--Stokes has been an open question up to this point.

Let us now state the main theorem.

\begin{thm}\label{maintheorem}
    For any $\alpha\in(2,4)$ and all sufficiently large $\lambda$, there exists initial data $u^0\in\cap_{s<\alpha-2}H^s$ that gives rise to two distinct Leray--Hopf solutions of \eqref{system}.
\end{thm}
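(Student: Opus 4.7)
My plan is to exploit the discrete scaling group \eqref{scalinginvariance} applied to an exactly self-similar initial datum. Take $u^0_k = c\lambda^{-(\alpha-2)k}$ on $\mathbb N$: the geometric series $\sum_{k\geq 0}\lambda^{2(s-\alpha+2)k}$ converges precisely when $s<\alpha-2$, so $u^0\in\cap_{s<\alpha-2}H^s$, and $u^0$ is a fixed point of the shift-and-rescale action $v_k\mapsto\lambda^{\alpha-2}v_{k+1}$. On the auxiliary index set $I=\mathbb Z$, where the scaling is a genuine symmetry of the dynamics, any solution $u^A$ of \eqref{system} from a $u^0$-compatible tail produces a second solution
\[
u^B(t):=\lambda^{\alpha-2}Lu^A(\lambda^{-2}t)
\]
with the same tail initial datum, and the two coincide if and only if $u^A$ is itself discretely self-similar.

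To produce such a $u^A$ and show it cannot be discretely self-similar, I would pass to the critically renormalised profiles $V_k(\tau):=\lambda^{(\alpha-2)k}u_k(\lambda^{-2k}\tau)$, in which \eqref{system} becomes a system whose nearest-neighbour couplings carry small prefactors $\lambda^{-2}$ and $\lambda^{-2(\alpha-2)}$. At leading order each shell reduces to a decoupled viscous ODE with a slow amplifier source from $V_{k-1}$ and a fast forcing from $V_{k+1}^2$; this is the sense in which the dynamics decomposes into weakly coupled finite-dimensional systems. A Leray--Hopf $u^A$ exists by the standard Galerkin weak-limit argument. The fully scaling-invariant (stationary-in-$\tau$) solutions of the reduced system can be written down explicitly, but their dissipation integrals are incompatible with \eqref{energyinequality} on the range $\alpha\in(2,4)$, so any Leray--Hopf $u^A$ must break the scaling symmetry, yielding $u^A\neq u^B$ on $\mathbb Z$. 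The restriction to $\mathbb N$ is then handled as in the footnote after \eqref{system}, by splicing a low-frequency cap of finite dimension that is fed by the self-similar tail through the same small couplings.

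The main obstacle, in my view, is to verify that the scaled solution $u^B$ actually satisfies \eqref{energyinequality} and not merely the evolution equations of \eqref{system}. A direct computation gives $\|u^B(t)\|^2_{L^2}=\lambda^{2(\alpha-2)}\bigl[\|u^A(\lambda^{-2}t)\|^2_{L^2}-|u^A_0(\lambda^{-2}t)|^2\bigr]$, and combining this with the energy identity on shell $0$ reduces the Leray--Hopf inequality for $u^B$ to a sign condition of the form $\int_0^{\lambda^{-2}t}u^A_0(u^A_1)^2\,ds\leq 0$. This is not automatic and must be engineered into the construction, for example by choosing $c<0$ or by selecting a Galerkin scheme with tailored symmetry. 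Closely related to this, the splicing to the low-frequency cap must be performed compatibly for both branches, using the weak-coupling estimates to keep them distinct after the cap is attached; quantifying that coupling in a norm simultaneously adapted to the critical scaling, the supercritical energy budget, and the Leray--Hopf class is where I expect the technical heart of the proof to lie.
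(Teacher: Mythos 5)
Your proposal takes a genuinely different route from the paper, and unfortunately it has a gap at its center that the paper's machinery exists precisely to fill. The cleanest way to see the difference: the paper does not use exactly self-similar data. It constructs data that is only approximately self-similar (in the $\lambda\to\infty$ limit), and the two solutions $u,v$ arise from two distinct ``pairings'' of adjacent modes --- odd pairs $(u_0,u_1),(u_2,u_3),\ldots$ annihilate for $u$, even pairs $(u_1,u_2),(u_3,u_4),\ldots$ annihilate for $v$. These are two separately built solutions whose initial data are then \emph{forced} to agree by a Schauder fixed-point argument. The relation $u\approx\lambda^{\alpha-2}Lv(\lambda^{-2}\cdot)$ is an emergent approximate feature, not a symmetry one invokes to manufacture the second solution from the first. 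In contrast, you want exactly self-similar data on $I=\mathbb{Z}$ and a second solution obtained by applying the scaling group. That starting point cannot survive the passage to $\mathbb N$: on $\mathbb Z$ the data $c\lambda^{-(\alpha-2)k}$ has infinite energy when $\alpha>2$ (the sum diverges at $k\to-\infty$), so there is no Leray--Hopf class to speak of; on $\mathbb N$ the exact scaling symmetry \eqref{scalinginvariance} does not act, because the shift $L$ does not preserve the boundary condition $u_{-1}=0$, so $u^B:=\lambda^{\alpha-2}Lu^A(\lambda^{-2}\cdot)$ is no longer a solution and does not have datum $u^0$. The footnote after \eqref{system} is about interrupting an approximate self-similarity at the low modes, not about restricting an exact $\mathbb Z$-solution.

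The second gap is the distinctness argument. You propose to show $u^A\neq u^B$ by arguing that a discretely self-similar profile cannot be Leray--Hopf, so ``any'' Leray--Hopf $u^A$ must break the symmetry. This is not enough for two reasons. First, breaking self-similarity does not by itself distinguish $u^A$ from $u^B$: if the Galerkin scheme is set up in the only natural way, the weak limit could simply be unique, in which case $u^A$ and $u^B$ would coincide even if neither is self-similar (uniqueness, not self-similarity, is what must fail). Second, on $\mathbb Z$ the dissipation-integral obstruction you gesture at is vacuous because the energy is already infinite, and on $\mathbb N$ the notion of a discretely self-similar solution is not well-defined to begin with. The paper sidesteps all of this by exhibiting $u$ and $v$ directly and observing from the stable-manifold asymptotics (Proposition~\ref{stablemanifoldprop}) that $v_1$ and $u_1$ have incompatible decay rates, so $u\neq v$ by inspection.

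Finally, your renormalised profile $V_k(\tau)=\lambda^{(\alpha-2)k}u_k(\lambda^{-2k}\tau)$ is not the same decomposition as the paper's. The paper's ``weakly coupled finite-dimensional systems'' are the \emph{binary} subsystems \eqref{binarysystem}: pairs of adjacent modes, decoupled because the drift from below is slow and the force from above has already dissipated on the relevant time scale $N_k^{-2}$. The weak coupling is a time-scale separation phenomenon, not a small-prefactor phenomenon, and it underlies the stable-manifold construction (Section~\ref{stablemanifoldsection}), the good unknowns $(r_k,z_k)$ and bootstrap (Section~\ref{bootstrapsection}), and ultimately the compactness needed for the Schauder step (Section~\ref{constructthedataprovethetheoremsection}). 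None of that machinery appears in your proposal, and it is precisely what closes the gaps you flag (the energy inequality, the splicing, and the distinctness). You correctly identify those issues as the hard parts, but the route you sketch does not give tools to resolve them.
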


\begin{remark}
    The lower bound on $\alpha$ is expected to be sharp because if $\alpha<2$, the energy is subcritical and the system is locally well-posed by an elementary application Picard's method. The critical case $\alpha=2$ is more subtle and we do not attempt to address it here; however it is reasonable to expect that there is uniqueness in analogy with the situation for the full Navier--Stokes.
    
    The condition $\alpha<4$ is technical and can certainly be removed by a more refined analysis (see Footnote~\ref{alpha4footnote} on p.~\pageref{alpha4footnote}). Note that the range $2<\alpha<4$ already encompasses the cases of greatest interest including the sharp range of parameters for $d=3$ and $4$. 
\end{remark}

\begin{remark}
    The solutions $u,\,v$ are non-negative which is natural in view of the fact that they model the energy spectrum. Moreover, for every $t>0$, they decay exponentially in frequency:
    \eqn{
    u_k(t),\,v_k(t)\lesssim_\lambda N_k^{-\alpha+2} e^{-N_k^2t}
    }
    This corresponds to solutions of the Navier--Stokes which are spatially smooth for positive times. This is a feature in common with the conjectured solutions of Jia and \v Sver\'ak \cite{jiasverakinventiones,jiasverak} as well as the forced solutions of Albritton, Bru\'e, and Colombo \cite{abc}.
\end{remark}

\begin{remark}
    Setting the distance between the scales $\lambda$ large is physically meaningful in the context of the Obukhov model; see Section~\ref{PDEpropsects}.
\end{remark}

\begin{remark}
    For large choices of the parameter $\lambda$, the data becomes nearly discretely self-similar under rescalings by $\lambda^n$, $n\in\mathbb Z$. For positive times, the symmetry is broken and the solutions are only invariant under rescaling by $\lambda^{n}$, $n\in2\mathbb Z$. $u$ and $v$ represent the two possible ways the symmetry might break. (See Section~\ref{nonuniquenessandsymmetrysection} for more details.) There is an intriguing analogy with the numerical simulations of Guillod and \v Sver\'ak~\cite{guillodsverak} who show how non-uniqueness could emerge from breaking a reflection symmetry. We remark that there has also been work on non-uniqueness and symmetry breaking in the sense of a 3D flow emerging from 2D initial data \cite{bardossymmetrybreaking,wiedemanninviscidsymmetry,barkerprangejinsymmetrybreaking}.
\end{remark}

\begin{remark}\label{criticalityremark}
    In addition to lying in every supercritical $H^s$ space, the data satisfies $u_k^0\sim_\lambda N_k^{-\alpha+2}$ which puts it in the critical space $X^{\alpha-2}$ (see Section~\ref{notationsection}). Analogizing to the full Navier--Stokes system, one can compute that this corresponds to the velocity field belonging to the Besov space $B_{p,\infty}^{-1+2(\alpha-1)/p}$ for all $p\in[1,\infty]$. In the case of maximum intermittency (for instance, self-similar solutions), one has $\alpha=\frac d2+1$ and the space becomes $ B_{p,\infty}^{-1+d/p}$ which is a critical space lying just beyond the local well-posedness theory.

    We briefly present the heuristics that suggest the velocity would lie in this space. The reader unfamiliar with the numerology of dyadic models might first consult Section~\ref{dyadicmodelsection}. With $v$ a vector field corresponding to the dyadic solution $u$, we have
    \eqn{
    \|v\|_{ B_{p,\infty}^{-1+2(\alpha-1)/p}}\sim_\lambda\sup_{k\geq0}(N_k^{-1+\frac{2}p(\alpha-1)}\|P_{\sim_{N_k}}v\|_{L^p})
    }
    where $P_{\sim N_k}$ represents projection to the frequency shell around $N_k$. We may conceptualize $P_{\sim N_k}v$ as a sum of disjoint wavelets $\psi_1+\cdots+\psi_{m_k}$ with amplitude\footnote{The Obukhov model only allows one amplitude per frequency shell. Of course it would be more realistic if all the wavelet coefficients evolved separately.} $\sim N_k^{\alpha-1} u_k$ (the solution of \eqref{system}). Each wavelet is supported on a spatial set of volume $\sim~ N_k^{-d}$. Recall that the parameter $\alpha$ gives that there are $\sim m_k=N_k^{d-2(\alpha-1)}$ eddies per frequency shell. Thus we can estimate
    \eqn{
    \|P_{\sim N_k}v\|_{L^p}\lesssim |u_k|N_k^{-\frac2p(\alpha-1)}\lesssim N_k^{1-\frac2p(\alpha-1)}
    }
    using that $|u_k|\lesssim N_k^{-\alpha+2}$. From this we conclude.
\end{remark}

\begin{remark}\label{instabilityremark}
    The scenario is unstable in the sense that we do not expect it to persist for any open set of data about $u^0$. There is a destabilizing ``drift''-type term fighting against the stabilization from the dissipation, and unfortunately the former dominates on the relevant time scale. See Section~\ref{2dheuristicssection} for more details.
\end{remark}

\subsection{Previous results for the Navier--Stokes equations}\label{navierstokespdenonuniquenesssection}

There has been a great deal of work in the PDE setting toward exhibiting non-uniqueness in the Leray--Hopf class. Note that we say that a vector field $u$ is a Leray--Hopf solution of \eqref{navierstokes} if it solves the PDE in a distributional sense and obeys an energy inequality analogous to \eqref{energyinequality}. It is a classical theorem of Leray~\cite{leray} that these solutions exist for any divergence-free $L^2$ data. Let us also recall that \eqref{navierstokes} has a continuous scaling symmetry analogous to \eqref{scalinginvariance} that defines a notion of criticality and motivates introducing the forward self-similar ansatz
\eqn{
u(t,x)=\frac1{\sqrt t}U(\frac x{\sqrt t},\log t).
}
This is the starting point for the program of Jia and \v Sver\'ak \cite{jiasverakinventiones,jiasverak} for constructing non-unique Leray--Hopf solutions from self-similar data. If one can find a profile $U$ that has an unstable eigenvalue for the dynamics in self-similar variables, this can be translated into non-uniqueness in the original variables. While there is compelling numerical evidence that such an instability exists~\cite{guillodsverak}, there is not a clear path to a rigorous proof.

One can relax the problem by introducing an external force, the result being that the profile $U$ does not have to solve any PDE. This was accomplished by Albritton, Bru\'e, and Colombo~\cite{abc} who constructed an unstable vortex ring based on an unstable 2D vortex constructed by Vishik~\cite{vishik1,vishik2}. In the dyadic setting, there is also an example of non-unique Leray--Hopf solutions in the presence of an external force due to Filonov and Khodunov~\cite{filonovkhodunov}. At this point there are many works, including by the author, obtaining strong non-uniqueness theorems by including an external force~\cite{vishik1,vishik2,abc,bulut2023convex,bulut2023non,dai2023non,hofmanova2023non}. It is important to mention that in all these examples, the forces are quite singular near the initial time and so do not rule out the possibility of a reasonable well-posedness theory.

Convex integration offers another approach to constructing non-unique solutions of fluid equations, introduced for the Euler equations by De Lellis and Sz\'ekelyhidi \cite{delellisszekelyhididifferentialinclusion,delellisszekelyhidicontinuousdissipative}. Buckmaster and Vicol \cite{buckmastervicol} used this method to construct examples of non-uniqueness for the 3D Navier--Stokes equations in the relatively low regularity class $C_tH_x^\epsilon$. Tao~\cite{taoblognavierstokesconvexintegration} extended these ideas to the space $C_tH^{\frac12-\epsilon}$ for the Navier--Stokes in $\mathbb R^d$, where $d$ is chosen large depending on $\epsilon>0$. Unfortunately $H^\frac12$ appears to be a hard limit for convex integration for PDEs of this type, which is quite far from the energy space $L_t^2H_x^1$. Another route is to weaken the dissipation so that the energy space lies at a lower regularity. In this way De Rosa~\cite{derosalerayhopf} proves non-uniqueness of Leray--Hopf solutions of the hypoviscous Navier--Stokes equations in which the viscosity term $-\Delta$ is replaced by $(-\Delta)^\gamma$, $\gamma<\frac13$. Due to criticality considerations, one does not expect this approach to extend to larger $\gamma$. Finally, let us mention the work of Cheskidov and Luo~\cite{cheskluosharpnonunique} that uses a time intermittent convex integration scheme to demonstrate non-uniqueness in $L_t^pL_x^\infty$, $p<2$. Remarkably, this scale of spaces approaches the critical space $L_t^2L_x^\infty$ on the Prodi--Serrin--Ladyzhenskaya scale. Still, these solutions are very far from having finite energy.

\subsection{Dyadic models}\label{dyadicmodelsection}

Shell models of turbulence have a long history beginning in the physics literature; we refer the reader to the recent survey~\cite{dyadicreview} for a more complete account. Broadly speaking, the idea is to model the flow of energy between various frequency shells which we denote by $N_k$, $k\in\mathbb N$. This yields interesting dynamics even in the simplest case where only nearest-neighbor interactions are considered and $u$ is taken to be scalar valued. There are two basic nonlinearities in this class of models: the Desnyansky--Novikov (DN)~\cite{dn}, also known as Katz--Pavlovi\'c (KP)~\cite{katzpavlovicfinitetimeblowup} nonlinearity
\eqn{
A_k[u,u]=-N_k^\alpha u_{k-1}^2+N_{k+1}^\alpha u_k u_{k+1}
}
and the Obukhov~\cite{obukhov} nonlinearity
\eqn{
B_k[u,u]=N_{k-1}^\alpha u_{k-1}u_k-N_k^\alpha u_{k+1}^2.
}
$A$ and $B$ both enjoy the cancellation property that $(A[u,u],u)_{\ell^2}$ and $(B[u,u],u)_{\ell^2}$ vanish for all $u\in H^{10\alpha}$ (say). This implies energy conservation for ``classical'' solutions. These models are fundamental in the sense that every quadratic nonlinearity in this category that conserves energy is a linear combination of $A$ and $B$~\cite{kiselevzlatosdiscrete}.

There is a clear distinction between the models: $A$ emphasizes the forward cascade of energy and consequently is a popular choice for understanding possible blow-up of fluid equations and the onset of turbulence \cite{katzpavlovicfinitetimeblowup,cfp2007,cheskidovblowup,cfp2010,barbatodyadic2011}; on the other hand, $B$ contains a more robust inverse cascade effect through the term $-N_k^\alpha u_{k+1}^2$. We suggest that $B$ is in fact a more natural model for understanding possible non-uniqueness of Leray--Hopf solutions. Indeed, consider Leray's original construction, in which one constructs distinct solutions $u$ and $v$ as limits of solutions $u_k$ and $v_k$ of Galerkin-truncated systems below frequency $N_k$. Clearly along a subsequence, $u$ and $v$ must differ at some bounded frequency $N_0$; otherwise the difference disappears in the weak limit. Since at the initial time, $u_k$ and $v_k$ only differ at modes $\gtrsim N_k$, the difference between them must cascade downward in frequency. This motivates our effort to capture this cascade through the $-N_k^\alpha u_{k+1}^2$ term in the Obukhov nonlinearity.

One of several interpretations of the dyadic models is as follows. Suppose that at each frequency scale $N_k$, the velocity field is supported on a set of measure $\sim~N_k^{-2(\alpha-1)}$. In order to normalize the energy ($L^2$ norm) one must scale by $N_k^{\alpha-1}$. As a result, one takes $u_k$ such that the amplitude of the projection $P_{\sim N_k}u$ is $\sim N_k^{\alpha-1}u_k$; thus the energy contained in the $k$th frequency shell is $\frac12u_k^2$ and the total energy is $\frac12\sum_{k\geq0}u_k^2$. Based on this amplitude and the original Navier--Stokes nonlinearity one can derive the power $\alpha$ in the formulas for $A$ and $B$.

In this way, the value of $\alpha$ characterizes the intermittency of the turbulence. At one extreme, $\alpha=1$ implies that at every scale, the turbulent behavior occupies a bounded fraction of the volume of the spatial domain---this is the non-intermittent picture in accordance with Kolmogorov's predictions. At the other extreme, by the uncertainty principle, the support of $P_{\sim N_k}u$ must have at least volume $\sim N_k^{-d}$, corresponding $\sim1$ ``eddies'' at the length scale $N_k^{-1}$. This corresponds to $\alpha=\frac d2+1$ and is in agreement with, for instance, self-similar solutions. In this case one expects to find the sharpest counterexamples to the well-posedness theory. In the general case $\alpha\in[1,\frac d2+1]$, the Hausdorff codimension of the turbulence in the domain is given by $2(\alpha-1)$.

\subsection{Heuristics for the binary system}\label{2dheuristicssection}

To understand the scenario that leads to Theorem~\ref{maintheorem}, we must first consider \eqref{system} as a sequence of coupled two-dimensional systems. For instance, we may let $k$ run over the positive odd integers and partition \eqref{system} into subsystems
\begin{equation}\begin{aligned}
\dd_tu_{k-1}&=b_k(t)u_{k-1}-N_{k-1}^\alpha u_k^2\\
\dd_t u_k&=-N_k^2u_k+N_{k-1}^\alpha u_{k-1}u_k+f_k(t)\label{binarysystem}
\end{aligned}\end{equation}
for all $k\in2\mathbb N+1$, where
\eqn{
b_k(t)=N_{k-2}^\alpha u_{k-2}-N_{k-1}^2,\quad f_k(t)=-N_k^\alpha u_{k+1}^2.
}
The coupling between the ODE systems across the various $k\in2\mathbb N+1$ is mediated by the ``drifts'' $b_k(t)$ and ``forces'' $f_k(t)$. We emphasize that \eqref{binarysystem} is nothing more than a reorganization of the original system \eqref{system}.

It is well-understood that as long as the nonlinearity stays reasonably controlled\footnote{It is expected that the dissipative Obukhov model is globally well-posed. However, to our knowledge, this is only known for the inviscid model~\cite{kiselevzlatosdiscrete}, and we do not pursue the finite-time blow-up question here.}, the dissipation term will cause the $j$th mode to decay exponentially on time scale $N_j^{-2}$. The idea of the scenario is to arrange the data in such a way that in the dynamics of \eqref{binarysystem}, $u_{k-1}$ actually decays on the time scale $N_k^{-2}$, much quicker than what is expected from the dissipation term $N_{k-1}^2u_{k-1}$. Thus for every $k$ odd, the pair $(u_{k-1},u_k)$ annihilates on time scale $N_k^{-2}$. This results in an approximate decoupling of the dynamics of \eqref{binarysystem} for different $k$'s. Indeed, on the relevant time scale for $(u_{k-1},u_k)$, the drift $b_k$ is essentially stationary, because $u_{k-2}$ changes on the much longer time scale $N_{k-2}^{-2}$. Similarly, $f_k$ has dissipated away and become essentially negligible for times $t\gg N_{k+2}^{-2}$.

Thus, \eqref{binarysystem} can be approximated by the decoupled system
\begin{equation}\begin{aligned}\label{decoupledsystem}
\dd_tu_{k-1}&\approx b_k(0)u_{k-1}-N_{k-1}^\alpha u_k^2\\
\dd_t u_k&\approx-N_k^2u_k+N_{k-1}^\alpha u_{k-1}u_k
\end{aligned}\end{equation}
for all $k\in2\mathbb N+1$. The advantage of \eqref{decoupledsystem} is that across the different $k\in2\mathbb N+1$, the systems are fully decoupled and each one can be easily understood in isolation. The dynamics are shown in Figure~\ref{binarysystemphaseportraitfigure}. It is easy to see that for this two-dimensional system, $(u_{k-1},u_k)=(0,0)$ is a hyperbolic point from which a stable manifold emerges, part of which is shown in red. We define $\delta_k$ to be the value of $u_{k-1}$ for which $u_k$ is stationary, namely
\eqn{
\delta_k=N_{k-1}^{-\alpha}N_k^2.
}
This scale is essential in our analysis and represents the critical amplitude of $u$.

\begin{figure}
    \centering
    
    \hspace{-.41in}\includegraphics[scale=1.1]{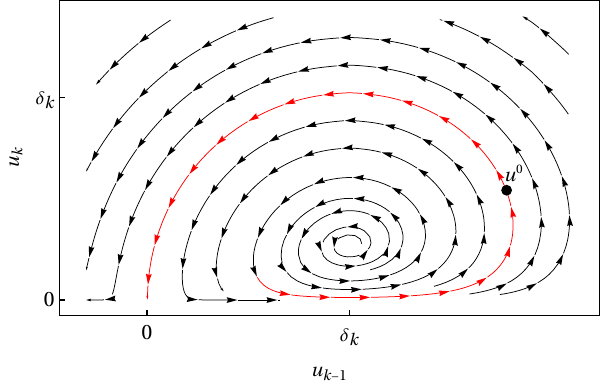}
    \caption{The approximate dynamics of the system for $(u_{k-1},u_k)$, obtained by setting $b_k=b_k(0)$ and $f_k=0$. Part of the stable manifold of $(0,0)$ is highlighted in red and is nearby the semicircle centered at $(\delta_k,0)$ with radius $\delta_k$. In this simplified picture, one should imagine the initial data being chosen on the stable manifold, close to the point $(2\delta_k,0)$, at an angle $\sim\lambda^{-\alpha+2}$ above the horizontal. The result is the $k-1$ and $k$th modes annihilate on the short time scale $N_k^{-2}$.}\label{binarysystemphaseportraitfigure}
\end{figure}

One can see that if the scales are well-separated and $b_k(0)\ll N_k^2$, then this branch of the manifold approximates a semicircle centered at $(\delta_k,0)$ with radius $\delta_k$. In particular, $u_{k-1}$ decays\footnote{More precisely, $u_{k-1}=\frac12\delta_k^{-1}u_k^2+O(u_k^4)$ for large times.} like the square of $u_{k}$ which leads the whole two-dimensional system to decay on time scale $\sim N_{k}^{-2}$ as desired. Conceptually, what is happening is the $(k-1)$st mode is transferring its energy to the $k$th mode, which is in turn dissipating it at a fast rate. By choosing the data carefully, one arranges that this process drains the exact amount needed for $u_{k-1}$ to vanish. Note $b_k$ is positive on the relevant time scale which destabilizes this arrangement; see Remark~\ref{instabilityremark}. In reality, one cannot simply choose initial data freely off a one-dimensional manifold because there are infinitely-many of these systems coupled together, the stable manifold of one depending on the initial data for all the others. This creates a technical challenge that we discuss in Section~\ref{stablemanifoldsection}.

\subsection{Non-uniqueness and breaking the scaling symmetry}\label{nonuniquenessandsymmetrysection}

Here we give an overview of how the dynamics described in Section~\ref{2dheuristicssection} give rise to non-unique Leray--Hopf solutions. For simplicity in this exposition, let us continue to neglect the coupling between the systems \eqref{binarysystem}. So far we have argued that there exist solutions of \eqref{system} such that for every $k\geq1$ odd, the dynamics of $(u_{k-1},u_k)$ approximately follow the trajectory depicted in Figure~\ref{binarysystemphaseportraitfigure} and decay exponentially on time scale $\sim N_k^{-2}$. In fact, almost the same reasoning applies when the $k$'s are instead taken to be even (with just a trivial modification for the zeroth mode). Thus, we obtain a second solution $v$ of \eqref{system} such that $(v_{k-1},v_k)$ obey \eqref{decoupledsystem} for all $k\geq2$ even. The final observation is that the initial data for $u$ and $v$ can be selected on their respective stable manifolds to agree. Assuming the 2D dynamics are fully decoupled, the trajectories become semicircles and the existence of a common datum $u^0$ is clear because one can construct a solution to the algebraic system
\eq{
\left(\frac{u_{k-1}^0}{\delta_k}-1\right)^2+\left(\frac{u_k^0}{\delta_k}\right)^2=1\quad\text{for all }k\geq1\label{dataequation}
}
by a fixed point argument. We illustrate the concept of the non-uniqueness mechanism in Figure~\ref{nonuniquenessconceptfigure}. In practice, due to the coupling between the systems, selection of the data requires a more delicate topological argument. 

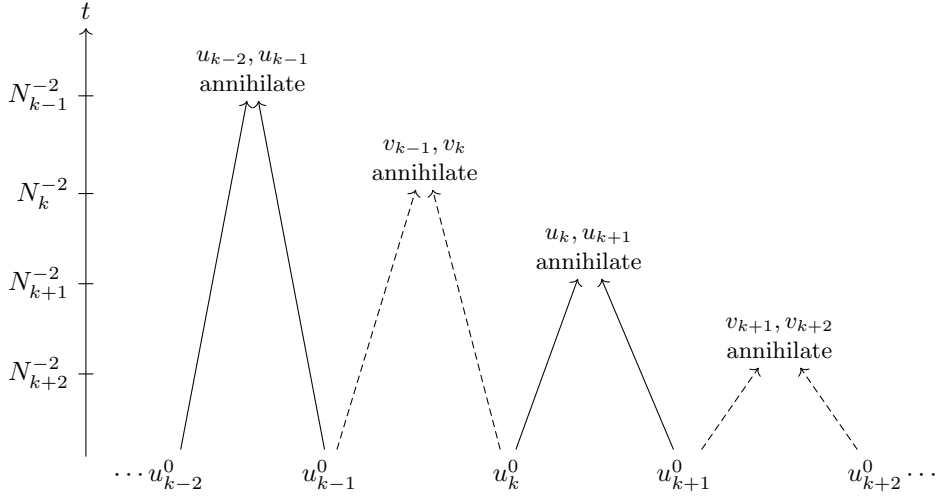
\begin{figure}
    \centering

\begin{minipage}{0.08\textwidth} 
    \begin{tikzpicture}
        \draw[->] (0,2.3) -- (0,8.) node[anchor=south] {$t$};

        \node at (0,1.2) {{\color{white}}};

  \draw (.1,3.4) -- (-0.1,3.4) node[anchor=east] {$N_{k+2}^{-2}$};
  \draw (0.1,4.6) -- (-0.1,4.6) node[anchor=east] {$N_{k+1}^{-2}$};
  \draw (0.1,5.8) -- (-0.1,5.8) node[anchor=east] {$N_{k}^{-2}$};
  \draw (0.1,7.1) -- (-0.1,7.1) node[anchor=east] {$N_{k-1}^{-2}$};
    \end{tikzpicture}
\end{minipage}
\hspace{0cm} 
\begin{minipage}{0.9\textwidth} 
    \begin{tikzcd}[row sep=.4cm, column sep=-.4cm]
&&\parbox{1.5cm}{\small\centering$u_{k-2},u_{k-1}$\\annihilate}&&&&&&&\\
&&&&\parbox{2cm}{\small\centering$v_{k-1},v_{k}$\\annihilate}&&&&&\\
&&&&&&\parbox{2cm}{\small\centering$u_{k},u_{k+1}$\\annihilate}&&&\\
&&&&&&&&\parbox{2cm}{\small\centering$v_{k+1},v_{k+2}$\\annihilate}&\\[2em]
\cdots\hspace{.15cm} &u_{k-2}^0 \arrow[uuuur] &&u_{k-1}^0 \arrow[uuuul] \arrow[uuur, dashed] &&u_k^0 \arrow[uuul, dashed] \arrow[uur] &&u_{k+1}^0 \arrow[uul] \arrow[ur, dashed] &&u_{k+2}^0 \arrow[ul, dashed] &\hspace{.15cm}\cdots
\end{tikzcd}
\end{minipage}

    \caption{The data $u^0$ is such that any adjacent pair of modes $(u_{k-1},u_k)$ can annihilate (by which we mean approximately follow the dynamics in Figure~\ref{binarysystemphaseportraitfigure}) on the time scale $N_{k}^{-2}$. This annihilation takes place in such a way that it is only weakly coupled to the rest of the system. There are two possible configurations which are approximately rescaled and shifted versions of each other: the solid interactions leading to $u$, and the dashed interactions leading to $v$.}\label{nonuniquenessconceptfigure}
\end{figure}

In order to discuss the self-similar properties of the solutions, we temporarily take the index set for the modes to be $I=\mathbb Z$, the natural choice for Navier--Stokes on $\mathbb R^d$. In fact our construction is even simpler in this setting (at the cost of yielding infinite energy solutions).

Applying Taylor's theorem to \eqref{dataequation}, we see that when $\lambda$ is taken large, the data approaches discrete self-similarity in the sense that
\eqn{
u_k^0=\lambda^{-(\alpha-2)k}(1+o_{\lambda\to\infty}(1))U^0
}
where the ``self-similar profile'' is $U^0=u_0^0$. In other words, the data is approximately invariant under the group of scaling symmetries
\eq{
u^0\mapsto\lambda^{(\alpha-2)n}L^nu^0,\quad n\in\mathbb Z\label{datasymmetry}
}
where, once again, $L$ is the left shift operator.

Now consider the solutions $u(t),v(t)$. One can see that as $\lambda$ becomes large and the drift $b_k$ becomes negligible compared to the dissipation rate $N_k^2$, \eqref{binarysystem} reduces further and we have the explicit solution
\eqn{
(u_{k-1},u_k)=\frac{2\delta_k}{1+\lambda^{-2(\alpha-2)}\exp(2N_k^2t)}(1,\lambda^{-(\alpha-2)}e^{N_k^2t})
}
for $k\in2\mathbb N+1$, and similarly for $v$ when $k\in2\mathbb N+2$. It is easy to see that these approximate solutions are invariant under the group of transformations
\eq{\label{solutionsymmetry}
u(t)\mapsto\lambda^{(\alpha-2)n}L^nu(\lambda^{-2n}t), \quad v(t)\mapsto\lambda^{(\alpha-2)n}L^nv(\lambda^{-2n}t),\quad n\in2\mathbb Z,
}
with $u(t)=\lambda^{\alpha-2}Lv(\lambda^{-2}t)$. In summary, the approximate symmetries of $u,v$ (given by \eqref{solutionsymmetry}) form a proper subgroup of the symmetries of the data (given by \eqref{datasymmetry}). Thus we conclude that a breaking of the approximate scaling symmetry takes place at $t=0$.

\subsection{Prospects for the Navier--Stokes equations}\label{PDEpropsects}

Our interest in studying dyadic models is because they might suggest promising ideas for the full PDE system. Thus, some comments about the transferability of Theorem~\ref{maintheorem} to the Navier--Stokes equations are in order. Broadly speaking, one hopes to construct initial data supported in Fourier space on a sequence of shells at frequency $(N_k)_{k=0}^\infty$ such that the energy in each shell qualitatively obeys the dynamics described in Sections ~\ref{2dheuristicssection}--\ref{nonuniquenessandsymmetrysection} (up to some controllable errors from the other nonlinear terms). One can view this proposal as the non-uniqueness analog of Tao's~\cite{taoaveraged} program for proving blow-up of the Navier--Stokes equations based on a dyadic model. One might expect the non-uniqueness problem to be more tractable because the $k$th mode only needs to be controlled on the natural time scale $N_k^{-2}$ before it dissipates away, while the program for blow-up is genuinely global in time.

The significant distinction between the model in \cite{taoaveraged} and \eqref{system} is that while the former contains several different types of quadratic gates linked together in a complicated circuit, the latter is just a chain of amplifier gates (see Section~5.3 in~\cite{taoaveraged}). We contend that amplifier gates are probably the most favorable type for embedding into the true Navier--Stokes system due to their ability to operate across widely separated scales.

First, consider $\lambda$, the separation factor between the scales $N_k$ and $N_{k+1}$. Given that Theorem~\ref{maintheorem} requires $\lambda$ to be large, it is important to distinguish between the DN and Obukhov models (see Section~\ref{dyadicmodelsection}). In the DN model, it would be quite unnatural to take $\lambda$ large\footnote{This reality is reflected in Theorem~3.2 in \cite{taoaveraged} where $\lambda-1\ll1$ is required in order to embed a dyadic model into the averaged Euler equations. It is reasonable to expect that this constraint does not exist for models consisting only of amplifier gates across different frequency shells (such as \eqref{system}). It would follow from Theorem~\ref{maintheorem} that Leray--Hopf solutions are non-unique for an averaged Navier--Stokes equation, but we do not pursue this direction here.} because it contains the nonlinear term $N_k^\alpha u_{k-1}^2$ which, in the PDE setting, corresponds to the nonlinear interaction
\eq{\label{dninteraction}
P_{\sim N_k}\div (P_{\sim N_{k-1}}u)\otimes (P_{\sim N_{k-1}}u).
}
Because multiplication in physical space corresponds to convolution in Fourier space, \eqref{dninteraction} is only non-negligible if $2N_{k-1}\gtrsim N_k$, which fails if $\lambda$ is large. For a similar reason, the other term in the DN nonlinearity is only natural with $\lambda\sim 1$. On the other hand, in the Obukhov model, the terms $N_{k-1}^\alpha u_{k-1}u_k$ and $N_k^\alpha u_{k+1}^2$ correspond respectively to the Navier--Stokes interactions\footnote{If there is any doubt that this is in fact the correct correspondence, one can verify that the Obukhov-type Euler system $\dd_tP_{\sim N_k}u+(P_{\sim N_k}u)\cdot\grad (P_{\sim N_{k-1}}u)+P_{\sim N_k}\div(P_{\sim N_{k+1}}u)\otimes (P_{\sim N_{k+1}}u)+\grad p=0$, $\div u=0$ formally conserves the energy $\frac12\int|u|^2$.}
\eq{\label{nasherror}
(P_{\sim N_k}u)\cdot\grad (P_{\sim N_{k-1}}u)
}
and
\eq{\label{highhighlowinteraction}
P_{\sim N_k}\div(P_{\sim N_{k+1}}u)\otimes (P_{\sim N_{k+1}}u).
}
With these nonlinearities, a vast difference between the frequency scales causes no difficulty, in contrast with $\eqref{dninteraction}$.

This leads us to believe that if one can find Navier--Stokes dynamics such that \eqref{nasherror} and \eqref{highhighlowinteraction} are the strongest nonlinear interactions, then there is hope to embed the dynamics described in Section~\ref{2dheuristicssection}. This is promising because these are the dominant interactions appearing in convex integration schemes\footnote{We refer the unfamiliar reader to the review article~\cite{buckmastervicol2019review}.}; \eqref{nasherror} is what is known as the Nash error, and \eqref{highhighlowinteraction} is the high-high-to-low interaction that is instrumental for canceling out the errors at frequencies $\lesssim N_k$. In other words, there are already known scenarios where \eqref{nasherror}--\eqref{highhighlowinteraction} dominate and the DN-type terms are negligible.

We also remark that, in contrast to nearly all of the non-uniqueness results up to this point, the Laplacian is not treated perturbatively in our construction. Dissipation is an essential part of the non-uniqueness mechanism itself and furthermore has a very substantial role in simplifying the analysis by keeping the various modes active on separated time scales and for just a short time.

\subsection{Notation and definitions}\label{notationsection}

We define the natural numbers $\mathbb N$ to include $0$. Arithmetic operations on sets are defined in the obvious way; for instance we will frequently write $2\mathbb N+1$ and $2\mathbb N+2$ to mean the sets $\{1,3,5,\ldots\}$ and $\{2,4,6,\ldots\}$ respectively.

For real sequences $a$ indexed over $I\subset\mathbb Z$, we define the Sobolev norms
\eqn{
\|a\|_{H^s}=(\sum_{k\in I}\lambda^{2sk}|a_k|^2)^\frac12
}
as well as the weighted $\ell^\infty$ norms
\eqn{
\|a\|_s\coloneqq\sup_{k\in I}\lambda^{sk}|a_k|.
}
Let $H^s(I)$ and $X^s(I)$ be the spaces of real sequences over $I$ such that $\|a\|_{H^s}<\infty$ and $\|a\|_s<\infty$, respectively. In practice the index set will be clear from the contex and we will not write it explicitly. In analogy with the PDE setting, we write $L^2$ as a synonym for $H^0$. $H^s$ in the dyadic setting corresponds quite closely to the usual notion of $H^s(\mathbb T^d)$ (or $\dot H^s(\mathbb R^d)$, depending on the choice of $I$); see the discussion in Section~\ref{dyadicmodelsection}. $X^s$ can be thought of as a whole scale of Besov spaces with interpolation exponent $\infty$; see Remark~\ref{criticalityremark}.

For brevity we introduce the notation $A\lesssim B$ to mean there exists $c>0$ such that $A\leq cB$. $A\sim B$ means that $A\lesssim B\lesssim A$. $O(B)$ stands in for any quantity whose absolute value is controlled by $cB$ for some $c>0$.

The parameters are chosen with the following dependencies. $\alpha\in(2,4)$ should be taken as fixed and dependence on it will never be explicitly noted in the estimates. Notation such as $\lesssim$ and $O()$ may contain constants depending on $\alpha$ but not on $\lambda$. As a result, we may choose $\lambda$ large depending on the constants in $O$ and $\lesssim$. We will use the notation $O_X()$, $\lesssim_X$, and $\sim_X$ to indicate that the implicit constant \emph{is} allowed to depend on the list of parameters $X$.

\subsection{Plan of the paper}

In Section~\ref{stablemanifoldsection}, we construct families of solutions $u$ and $v$ near $t=\infty$ corresponding to the scenario described in Section~\ref{2dheuristicssection}. In Section~\ref{bootstrapsection} we use a bootstrap argument to show that the trajectories can be continued and lie in a certain region until time $t=0$. In Section~\ref{constructthedataprovethetheoremsection}, we use a topological argument to prove that there exist choices of the parameters so that $u$ and $v$ have the same data. In Appendix~\ref{stabilityappendixsection}, we prove several technical facts about stability needed in Section~\ref{constructthedataprovethetheoremsection}.

\subsection*{Acknowledgment}

The author acknowledges support from the NSF under
Grant No. DMS-1926686.

\section{Constructing the stable manifold}\label{stablemanifoldsection}

In this section and the next, in order to analyze the two solutions $u$ and $v$ simultaneously, the notation ``$u$'' will be used to refer to either of the two. We distinguish them based on the parity of the indices that annihilate as described in Section~\ref{2dheuristicssection}. Fixing $\sigma\in\{1,2\}$, we let $k$ run over the set $2\mathbb N+\sigma=\{\sigma,\sigma+2,\ldots\}$ (even when not specified explicitly). These will be the $k$'s for which the modes $u_{k-1},u_k$ interact and drain their energy on time scale $N_k^{-2}$. Thus, we will ultimately construct $u$ and $v$ by appealing to this argument with $\sigma=1$ and $\sigma=2$, respectively.

In the next proposition, we construct the particular branch of the stable manifold of the full infinite-dimensional system at $(0,0,\ldots)$ corresponding (roughly) to the trajectories depicted in Figure~\ref{binarysystemphaseportraitfigure}. To parameterize the desired submanifold, we introduce the parameters $(\tau_k)_{k\in2\mathbb N+\sigma}\in\mathbb R$ which can be thought of as time delays. In Section~\ref{constructthedataprovethetheoremsection}, their values will be set via a topological argument; for now let us just assume
\eq{\label{tauconstraint}
        |N_k^2\tau_k-\log\lambda^{\alpha-2}|\leq5.
        }
        Since the dynamics for the various 2D systems $(u_{k-1},u_k)$ are approximately decoupled, $\tau_k$ approximately has the effect of shifting the $k$th system by time $\tau_k$ relative to the trajectory with asymptotics $u_k\sim\delta_ke^{-N_k^2t}$.

The asymptotic behavior of the solution is constructed by a contraction mapping argument on the interval $[t_0,\infty)$, where
\eqn{
t_0\coloneqq2\alpha N_0^{-2}\log\lambda.
}
The map in question is a Duhamel-type formula that propagates $u_k$ backward-in-time from $t=\infty$ and $u_{k-1}$ forward-in-time from $t=t_0$. Its structure guarantees that any fixed point (with suitable bounds) is not only on the stable manifold but also has the prescribed values
\eqn{
u_k(t_0)=\delta_k\exp(-N_k^2(t_0-\tau_k))\quad\forall k\in2\mathbb N+\sigma.
}

\begin{prop}\label{stablemanifoldprop}
    For any $\sigma\in\{1,2\}$ and $(\tau_k)_{k\in2\mathbb N+\sigma}$ obeying \eqref{tauconstraint}, there exists a solution of \eqref{system} on the time interval $[t_0,\infty)$ such that
    \eq{
    0\leq u_{k-1}\leq3\delta_{k}e^{-2N_{k}^2(t-\tau_k)}\label{manifoldboundsk-1}\\
    |u_k-\delta_ke^{-N_k^2(t-\tau_k)}|\leq\frac12\delta_ke^{-N_k^2(t-\tau_k)}.\label{manifoldboundsk}
    }
    for all $k\in2\mathbb N+\sigma$. Additionally, $u_k(t_0)=\delta_ke^{-N_k^2(t_0-\tau_k)}$ for this range of $k$.
    
    In the case $\sigma=2$, we may freely specify the value of the zeroth mode at the time $t_0$ to be $u_0(t_0)=a$, where $a$ is taken from
    \eq{\label{arange}
    \frac32\delta_1\leq a\leq\frac52\delta_1.
    }
    
\end{prop}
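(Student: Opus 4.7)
The plan is to construct the solution as a Banach fixed point of a map $T$ on a weighted space $\mathcal{X}$ of continuous sequences $(u_j)_{j\geq 0}$ on $[t_0,\infty)$. The norm on $\mathcal{X}$ is chosen so that the ball $\mathcal{B}$ of radius $1/2$ (say) in $\mathcal{X}$ enforces, for each $k\in2\mathbb{N}+\sigma$, the bounds \eqref{manifoldboundsk-1}--\eqref{manifoldboundsk} on $u_{k-1}$ and $u_k-\delta_k e^{-N_k^2(t-\tau_k)}$, together with a suitable bound on $u_0$ in the $\sigma=2$ case.

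The map $T$ uses different Duhamel formulas for the ``high'' and ``low'' modes in each pair. For a high mode $k\in2\mathbb{N}+\sigma$, propagate forward from $t_0$ with the prescribed datum:
\eqn{
(Tu)_k(t)=\delta_k e^{-N_k^2(t-\tau_k)}+\int_{t_0}^t e^{-N_k^2(t-s)}\bigl(N_{k-1}^\alpha u_{k-1}u_k-N_k^\alpha u_{k+1}^2\bigr)(s)\,ds.
}
For the partner low mode $u_{k-1}$, the value at $t_0$ is \emph{not} prescribed; the stable-manifold condition that $u_{k-1}$ decay strictly faster than the natural dissipative rate $e^{-N_{k-1}^2 t}$ selects the unique such solution via the backward-from-infinity Duhamel
\eqn{
(Tu)_{k-1}(t)=\int_t^\infty e^{N_{k-1}^2(s-t)}\bigl(N_{k-1}^\alpha u_k^2-N_{k-2}^\alpha u_{k-2}u_{k-1}\bigr)(s)\,ds,
}
whose integrand converges because the source $u_k^2$ decays like $e^{-2\lambda^2 N_{k-1}^2(s-\tau_k)}$, much faster than the exponentially growing kernel $e^{N_{k-1}^2(s-t)}$ for large $\lambda$. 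For $\sigma=2$ the zeroth mode is treated separately by a forward Duhamel anchored at $u_0(t_0)=a$. Any fixed point of $T$ is a classical solution of \eqref{system} by differentiation, satisfies $u_k(t_0)=\delta_k e^{-N_k^2(t_0-\tau_k)}$, and obeys \eqref{manifoldboundsk-1}--\eqref{manifoldboundsk} by construction.

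It remains to verify $T:\mathcal{B}\to\mathcal{B}$ and that $T$ is a contraction. The key simplification is the identity $N_{k-1}^\alpha\delta_k=N_k^2$: substituting the $\mathcal{B}$-bounds turns every nonlinear integrand into a product of $\delta_k$, an exponential in $N_k^2(s-\tau_k)$, and an elementary time integral. A direct computation gives the leading contribution to $(Tu)_{k-1}$ as exactly $\tfrac12\delta_k e^{-2N_k^2(t-\tau_k)}$, well within the target bound. Each nonlinear correction carries an extra small factor: for the cross terms $N_{k-1}^\alpha u_{k-1}u_k$ and $N_{k-2}^\alpha u_{k-2}u_{k-1}$ the factor is $e^{2N_k^2(\tau_k-t_0)}$, super-exponentially small in $\lambda$ since $N_k^2 t_0\gtrsim\lambda^{2k}\log\lambda$ and $|N_k^2\tau_k|\lesssim\log\lambda$ by \eqref{tauconstraint}; for the high-high-to-low term $N_k^\alpha u_{k+1}^2$ the factor is a geometric power $\lambda^{-c(\alpha-2)}$ arising from $\delta_{k+2}/\delta_k=\lambda^{-2(\alpha-2)}$ (here $\alpha>2$ is essential). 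Both kinds of factors are driven arbitrarily small by taking $\lambda$ large, which yields self-mapping and, via the standard bilinearity identity $B(u,u)-B(v,v)=B(u-v,u)+B(v,u-v)$, a contraction estimate as well. The chief obstacle is not any individual estimate but the choice of $\mathcal{X}$: the weight on $u_{k-1}$ must encode the anomalous rate $N_k^2$ rather than the natural $N_{k-1}^2$ so that the ``slow'' piece $u_{k-1}(t_0)e^{-N_{k-1}^2(t-t_0)}$ of forward propagation is absent; the backward-from-infinity Duhamel is precisely the device that implicitly determines $u_{k-1}(t_0)$ from this decay condition. Minor special cases at the boundary $k=\sigma$ (where $u_{k-2}=0$ or $a$) and in the $\sigma=2$ treatment of $u_0$ do not affect the structure.
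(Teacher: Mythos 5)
Your proposal follows essentially the same strategy as the paper: a fixed point of a Duhamel-type map on a weighted space of sequences, propagating the high modes $u_k$ ($k\in2\mathbb N+\sigma$) forward from the prescribed value at $t_0$ and the low modes $u_{k-1}$ backward from $t=\infty$ so that the anomalously fast decay rate $e^{-2N_k^2 t}$ is built in and the slow homogeneous piece $u_{k-1}(t_0)e^{-N_{k-1}^2(t-t_0)}$ is excluded. The one structural difference is in the backward Duhamel for $u_{k-1}$: you keep the drift term $-N_{k-2}^\alpha u_{k-2}u_{k-1}$ as part of the source, whereas the paper absorbs it into the exponential kernel,
\eqn{
T(u)_{k-1}=N_{k-1}^\alpha\int_t^\infty\exp\Big(N_{k-1}^2(t'-t)-\int_t^{t'}N_{k-2}^\alpha u_{k-2}\Big)u_k^2(t')\,dt'.
}
The paper's form has two small advantages that your version must replace with extra estimates: it makes $T(u)_{k-1}\geq0$ manifest (an integral of a nonnegative integrand), which is needed since the ball is taken inside the cone of nonnegative sequences, and it removes the appearance of $u_{k-1}$ itself on the right-hand side. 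With your form you would need to verify separately that the drift contribution cannot flip the sign of $T(u)_{k-1}$; this does go through because the drift term carries the super-exponentially small factor $e^{-N_{k-2}^2(t_0-\tau_{k-2})}$, but it is an additional step. One minor imprecision: for the high-high-to-low term $N_k^\alpha u_{k+1}^2$, you attribute the smallness to the geometric factor $\lambda^{-2(\alpha-2)}$, but since the integral starts at $t_0\gg\tau_{k+2}$, this term actually also carries a super-exponential factor $e^{-4N_{k+2}^2(t_0-\tau_{k+2})}$ which dominates; the condition $\alpha>2$ is certainly used in the paper, but it is not what makes this particular estimate close.
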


\begin{proof}

Consider the norm
\eqn{
\| u\|_Y=\sup_{k\in2\mathbb N+\sigma}\delta_k^{-1}\|(\frac13e^{2N_k^2(t-\tau_k)}u_{k-1},2e^{N_k^2(t-\tau_k)}u_k)\|_{L^\infty([t_0,\infty))}.
}
Note that when $\sigma=2$, $\|\cdot\|_Y$ does not control the zeroth mode. This point does not cause any difficulties because in this case $u_0$ is controlled by $u_1$ (due to solving \eqref{system}) which is controlled by $\|\cdot\|_Y$.
Let
\eqn{
Y=\{ u\in C([t_0,\infty);[0,\infty))^{\mathbb N+2-\sigma}:\| u\|_Y<\infty\}
}
with the $t_0$ defined above.

Consider the operator $T:Y\to Y$ defined by
\eqn{
T( u)_{k-1}&=N_{k-1}^\alpha\int_t^\infty\exp\Big(N_{k-1}^2(t'-t)-\int_{t}^{t'}N_{k-2}^\alpha u_{k-2}\Big)u_{k}^2(t')dt'\\
T( u)_k&=\delta_k e^{-N_k^2(t-\tau_k)}+\int_{t_0}^t\exp(-N_k^2(t-t'))(N_{k-1}^\alpha u_{k-1}u_k-N_k^\alpha u_{k+1}^2)(t')dt'.
}
The $u_{k-2}$ appearing in $T(u)_{k-1}$ must be specified in terms of $u$ in order to close the system. In the case $\sigma=1$, there is no issue because $u_{-1}=0$ by definition. When $\sigma=2$, we define $u_0(t)$ according to
\eq{\label{u0duhamel}
u_0(t)=e^{-N_0^2(t-t_0)}a-N_0^\alpha\int_{t_0}^t\exp(-N_0^2(t-t'))u_1^2(t')dt'.
}
It is clear that if $u$ is a fixed point of $T$ then it solves \eqref{system} on $[t_0,\infty)$ and has the correct value at $t=t_0$.

Define the closed ball in the $Y$ norm,
\eqn{
B\coloneqq\overline B_Y((0,\delta_ke^{-N_k^2(t-\tau_k)})_{k\in2\mathbb N+\sigma},1)
}
where the ball is centered at the point whose $j$th coordinate is either $0$ if $j\equiv\sigma-1\pmod2$ or $\delta_j\exp(-N_j^2(t-\tau_j))$ if $j\equiv\sigma\pmod2$.
We claim $T:B\to B$ and that it is a contraction. First note the useful fact that for all $u\in B$ and $j\in\mathbb N$, $u_j\geq0$. This is obvious from the definition of $T$ for $j\in2\mathbb N+\sigma-1$ and from the definition of $B$ for $j\in2\mathbb N+\sigma$. This only leaves the case $\sigma=2$, $j=0$. In this case it is easy to verify from \eqref{arange}, \eqref{u0duhamel}, and the fact that $u\in B$ that we have both the upper and lower bounds
\eqn{
0<\frac14\delta_1\exp(-N_0^2(t-t_0))\leq u_0(t)\leq 2\delta_1\exp(-N_0^2(t-t_0))
}
for all $t\geq t_0$. As a result $u_0$ possesses the same favorable properties as the other modes, as far as positivity and upper bounds are concerned.

Next we verify that $T$ maps $B$ to $B$. This is a straightforward calculation using the definition of $T$, the positivity of $u$, and the estimates entailed by the fact that $u\in B$. For the ``$k-1$'' indices, we have
\eqn{
|T(u)_{k-1}(t)|&\leq N_{k-1}^\alpha\int_t^\infty\exp(N_{k-1}^2(t'-t)-2N_k^2(t'-\tau_k))\delta_k^2\frac94dt'\leq3\delta_ke^{-2N_k^2(t-\tau_k)},
}
using the definition of $\delta_k$ to simplify the constants. For the ``$k$'' indices, we split the two terms in the nonlinearity in the definition of $T(u)_k$; using the triangle inequality, we bound $|T(u)_k-\delta_ke^{-N_k^2(t-\tau_k)}|$ by $I_1+I_2$. Then, by the same estimates for $u$, \eqref{tauconstraint}, and the fact that $t_0\sim N_0^{-2}\log\lambda\gg\tau_k$,
\eqn{
I_1&\leq\int_{t_0}^t\exp(-N_k^2(t-t')-3N_k^2(t'-\tau_k))dt'(N_{k-1}^\alpha\frac32\delta_k^2)\\
&\leq \frac34\delta_ke^{-N_k^2(t-t_0)-3N_k^2(t_0-\tau_k)}\\
&\leq\frac34\delta_ke^{-N_k^2(t-\tau_k)}\lambda^{-2\lambda^{2k}}\leq \frac14\delta_ke^{-N_k^2(t-\tau_k)}
}
and
\eqn{
I_{2}&\leq 9\int_{t_0}^t\exp(-N_k^2(t-t')-4N_{k+2}^2(t'-\tau_k))N_k^\alpha\delta_{k+2}^2\\
&\leq3\delta_ke^{-N_k^2(t-\tau_k)}\lambda^{4-3\alpha-3\lambda^{2(k+2)}}\leq \frac14\delta_ke^{-N_k^2(t-\tau_k)},
}
upon choosing $\lambda$ sufficiently large. The last three display inequalities taken together prove that $T(u)\in B$.

Now we show that $T$ is a contraction on $B$. Let $u,w\in B$. We implement a similar decomposition, letting
\eqn{
|T(u)_{k-1}-T(w)_{k-1}|\leq J_1+J_2
}
where
\eqn{
J_1&=N_{k-1}^\alpha\int_t^\infty\exp(N_{k-1}^2(t'-t))|u_k^2-w_k^2|(t')dt',\\
J_2&=N_{k-1}^\alpha\int_t^\infty\exp(N_{k-1}^2(t'-t))\int_t^{t'}N_{k-2}^\alpha|u_{k-2}-w_{k-2}|(t'')dt''w_k^2(t')dt'
}
are the two terms obtained when taking the variation in $u$, bounding $|e^{-A}-e^{-B}|\leq|A-B|$ ($A,B\geq0$). Note that $J_2$ identically vanishes when $k=1$.

It is straightforward to estimate
\eqn{
J_1&\leq \frac12N_{k-1}^\alpha\delta_k^2\|u-w\|_Y\int_t^\infty\exp(N_{k-1}^2(t'-t)-2N_k^2(t'-\tau_k))dt'\\&\leq \delta_k\|u-w\|_Ye^{-2N_k^2(t-\tau_k)}
}
and, as long as $k\notin\{1,2\}$,
\eqn{
J_2&\lesssim N_{k-1}^\alpha N_{k-2}^\alpha\delta_k^2\delta_{k-2}\|u-w\|_Y\\
&\quad\times\int_t^\infty\exp(N_{k-1}^2(t'-t)-2N_k^2(t'-\tau_k))\int_t^{t'}e^{-N_{k-2}^2(t''-\tau_{k-2})}dt''dt'\\
&\leq\delta_k\lambda^{\alpha}\exp(-2N_k^2(t-\tau_k)-N_{k-2}^2(t-\tau_{k-2}))\|u-w\|_Y.
}
Because $t\geq t_0\gg\tau_{k-2}$, we have $\exp(-N_{k-2}^2(t-\tau_{k-2}))\geq\exp(-N_{k-2}^2t_0/2)\geq\lambda^{O(-\lambda^2)}$. Thus by choosing $\lambda$ large, we can make $J_2$ much smaller than $\delta_k\exp(-2N_k^2(t-\tau_k))\|u-w\|_Y$.

When $\sigma=2$ and $k=2$, we cannot directly bound $|u_0-w_0|$ in terms of $\|u-w\|_Y$ when estimating $J_2$. Instead, it is very straightforward to verify from \eqref{u0duhamel} that the same bound applies, using $\|u-w\|_Y$ to estimate $|u_1-w_1|$.

Finally, we decompose
\eqn{
|T(u)_k-T(w)_k|\leq K_1+K_2
}
where
\eqn{
K_1&=N_{k-1}^\alpha\int_{t_0}^t\exp(-N_k^2(t-t'))|u_{k-1}u_k-w_{k-1}w_k|dt'\\
K_2&=N_k^\alpha\int_{t_0}^t\exp(-N_k^2(t-t'))|u_{k+1}^2-w_{k+1}^2|(t')dt'.
}
Then by calculations similar to the ones for $J_1,J_2$, we have
\eqn{
K_1\leq 3\delta_k\|u-w\|_Ye^{-N_k^2(t-\tau_k)}\lambda^{-2\lambda^{2k}}
}
and
\eqn{
K_2&\leq 6\delta_k\|u-w\|_Ye^{-N_k^2(t-\tau_k)}\lambda^{-3\lambda^{2(k+2)}+O(1)}.
}
Taken together, the bounds for $J_1,J_2,K_1,K_2$ imply that $\|T(u)-T(w)\|_Y/\|u-w\|_Y$ can be made, say, $\leq\frac12$ by taking $\lambda$ large. Thus we conclude existence of the solution by the Banach fixed point theorem. The desired bounds \eqref{manifoldboundsk-1} and \eqref{manifoldboundsk} follow directly from the fact that $u\in B$ along with the Duhamel formula for $u_{k-1}$ which proves positivity.
\end{proof}

\section{Bootstrapping the trajectories}\label{bootstrapsection}

Once again in this section, we fix a parity $\sigma\in\{1,2\}$ and let $k$ run over $2\mathbb N+\sigma$. We construct two separate solutions for $\sigma=1$ and $\sigma=2$, extending the ones from Section~\ref{stablemanifoldsection} backward in time.

We introduce the following ``good unknowns'' $r$ and $z$. By rewriting the system in these variables, we will find that the top order behavior becomes linear. The unknowns are defined as
\eq{\label{rzdefinition}
r_k=\sqrt{\Big(\frac{u_{k-1}}{\delta_{k}}-1\Big)^2+\Big(\frac{u_k}{\delta_k}\Big)^2},\quad z_k=\frac{\delta_k(r_k-1)+u_{k-1}}{u_k}.
}
$r$ and $z$ are best interpreted based on the polar coordinate system centered at $(\delta_k,0)$ in the space of $(u_{k-1},u_k)\in\mathbb R^2$. $r_k$ is the distance from $(\delta_k,0)$ rescaled by $\delta_k^{-1}$, while $z_k=\cot\frac{\theta_k}2$ where $\theta_k$ is the polar angle.  The approximate behavior can be described as $\dot r_k\approx0$ and $\dot z_k\approx-N_k^2z_k$ with error terms at least quadratic in $r_k-1$ and $z_k$. Roughly speaking, the idea of the bootstrap is that when $N_k^2t$ is large, $z_k$ and $r_k-1$ are exponentially small; when $N_k^2t$ is small, there is not enough time for the errors to accumulate.

For brevity, we define the rational functions
\begin{equation}\label{rationalfunctions}
\begin{gathered}
    R_1(z)=\frac{z^2-1}{z^2+1},\quad R_2(z)=-\frac{2z^2(z^2-1)}{(z^2+1)^2},\quad R_3(z)=\frac{2z}{z^2+1},\\
    R_4(z)=-\frac{2z^2}{z^2+1},\quad R_5(z)=z^2-1.
\end{gathered}
\end{equation}
One verifies the elementary bounds
    \begin{equation}\label{Rrationalfunctionbounds}\begin{gathered}
    |R_1(z)|\leq 1,\quad |R_2(z)|,|R_4(z)|\leq2\min(z^2,1),\quad |R_3(z)|\leq\min(1,2z),\\
    |R_1'(z)|,|R_2'(z)|,|R_4'(z)|\leq\min(4z,2),\quad 
    |R_3'(z)|\leq2,\quad |R_5'(z)|\leq2z
    \end{gathered}\end{equation}
for all $z\geq0$. Now we record the following useful formulas which follow from trigonometry:
\eq{\label{ufromrandz}
u_{k-1}=\delta_k(1+R_1(z_k)r_k),\quad u_k=\delta_kr_kR_3(z_k).
}
It is easy to see that there is a one-to-one correspondence
 \eqn{
 \{(r,z)\in(0,\infty)^{(2\mathbb N+\sigma)\times2}\}\leftrightarrow\{u\in\mathbb R^{\mathbb N+\sigma-1}:u_k>0\text{ for all }k\in2\mathbb N+\sigma\}.
 }
 In other words, when $\sigma=1$, the $(r,z)$ coordinates completely describe the space of possible positive solutions of \eqref{system}. On the other hand, when $\sigma=2$, all but $u_0$ are determined. Here and elsewhere, when $r$ and $z$ appear without an index, they denote the full collections $(z_k)_{k\in2\mathbb N+\sigma}$ and $(r_k)_{k\in2\mathbb N+\sigma}$.

One computes\footnote{To perform this computation, it is quite a bit easier to first compute the system for the (rescaled) polar coordinates $r_k,\theta_k$. One notices that the quantity $\cot(\theta_k/2)$  appears often, and it is then straightforward to compute $\dot z_k$ from $\dot\theta_k$.} that these variables obey
\begin{equation}\begin{aligned}\label{rzsystem}
\dot r_k&=b_k(t)(-R_2(z_k)+R_1^2(z_k)(r_k-1))+\delta_k^{-1}R_3(z_k)f_k(t)\\
\dot z_k&=-N_k^2r_kz_k+b_k(t)z_k\left(-R_4(z_k)+r_k^{-1}-1\right)+(2\delta_kr_k)^{-1}R_5(z_k)f_k(t)
\end{aligned}\end{equation}
where $b_k=N_{k-2}^\alpha u_{k-2}-N_{k-1}^2$ and $f_k=N_k^\alpha u_{k+1}^2$.

For $\sigma\in\{1,2\}$ and $I\subset[0,\infty)$, we define the metric
\eqn{
d_{\sigma,I}((r,z),(s,w))&=\sup_{t\in I}\sup_{k\in2\mathbb N+\sigma}\big(w_k^r(t)|r_k(t)-s_k(t)|+w_k^z(t)|z_k(t)-w_k(t)|\big)
}
on the space
\eqn{
B_{\sigma,I}(R)=\{(r,z)\in(0,\infty)^{(2\mathbb N+\sigma)\times2}:d_{\sigma,I}((r,z),(1,\frac12e^{-N_k^2(t-\tau_k)})_{k\in2\mathbb N+\sigma})\leq R\}
}
where the weights are given by
\eqn{
w_k^r(t)=\max(\lambda^{\frac12(4-\alpha)},\exp(2N_k^2(t-\tau_k))),\quad w_k^z(t)=30\exp(N_k^2(t-\tau_k)).
}

Propositions~\ref{bootsrapboundsprop}--\ref{bootstrapconclusionprop} will bootstrap bounds backward from $t=t_0$ to $t=0$ in terms of the $r,z$ unknowns. But first, we must show that the bounds proved at $t_0$ in Proposition~\ref{stablemanifoldprop} suffice to initialize the bootstrap. Note that the arguments below are applied to the solution coming from Proposition~\ref{stablemanifoldprop} corresponding to some fixed choice of $\sigma\in\{1,2\}$, $\tau$ obeying \eqref{tauconstraint}, and $a$ obeying \eqref{arange}.

\begin{lem}\label{utorzlemma}
    Let $u$ be a solution of \eqref{system} on $[t_0,\infty)$ constructed in Proposition~\ref{stablemanifoldprop}. Expressing $u$ in coordinates $(r,z)$, it obeys $(r,z)\in B_{\sigma,[t_0,\infty)}({10})$.

    Moreover at $t_0$ we have the improved bound
\eqn{
    |z_k(t_0)-\frac12\exp(-N_k^2(t_0-\tau_k))|&=O(\exp(-3N_k^2(t_0-\tau_k))).
    }
\end{lem}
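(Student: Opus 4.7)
The plan is to deduce the $(r,z)$ bounds directly from the $(u_{k-1},u_k)$ bounds provided by Proposition~\ref{stablemanifoldprop} via the change of variables \eqref{rzdefinition}. For brevity write $\epsilon_k := N_k^2(t-\tau_k)$. Proposition~\ref{stablemanifoldprop} gives $u_{k-1}/\delta_k \in [0, 3e^{-2\epsilon_k}]$ and $u_k/\delta_k \in [\tfrac12 e^{-\epsilon_k}, \tfrac32 e^{-\epsilon_k}]$, and combining $t \geq t_0 \sim N_0^{-2}\log\lambda$ with \eqref{tauconstraint} yields $\epsilon_k \gtrsim \lambda^{2k}\log\lambda$, so $\epsilon_k$ swamps any prefactor polynomial in $\lambda$.

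The estimate for $r_k$ is essentially immediate. Expanding $r_k^2 - 1 = (u_k/\delta_k)^2 - 2u_{k-1}/\delta_k + (u_{k-1}/\delta_k)^2$ and applying the above bounds gives $|r_k^2 - 1| \lesssim e^{-2\epsilon_k}$, and since $r_k$ stays close to $1$ the denominator in $|r_k - 1| = |r_k^2 - 1|/(r_k+1)$ is bounded below by $1+o(1)$, so $|r_k - 1| \lesssim e^{-2\epsilon_k}$. Against the weight $w_k^r(t) = \max(\lambda^{(4-\alpha)/2}, e^{2\epsilon_k})$ this gives $w_k^r|r_k-1| = O(1)$: either $w_k^r = e^{2\epsilon_k}$ cancels the exponential decay, or $\lambda^{(4-\alpha)/2} e^{-2\epsilon_k}$ is super-exponentially small in $\lambda$ thanks to the lower bound on $\epsilon_k$.

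For $z_k$ the key move is to use the equivalent algebraic form
\[
z_k = \frac{u_k}{\delta_k(r_k+1) - u_{k-1}},
\]
which follows from the identity $(\delta_k(r_k-1)+u_{k-1})(\delta_k(r_k+1)-u_{k-1}) = \delta_k^2(r_k^2-1) + 2\delta_k u_{k-1} - u_{k-1}^2 = u_k^2$ together with \eqref{rzdefinition}. The denominator equals $2\delta_k(1 + O(e^{-2\epsilon_k}))$, so
\[
z_k = \frac{u_k}{2\delta_k} + O(e^{-3\epsilon_k}).
\]
Combining with $|u_k/(2\delta_k) - \tfrac12 e^{-\epsilon_k}| \leq \tfrac14 e^{-\epsilon_k}$ from Proposition~\ref{stablemanifoldprop} yields $|z_k - \tfrac12 e^{-\epsilon_k}| \leq \tfrac14 e^{-\epsilon_k} + O(e^{-3\epsilon_k})$, whence $w_k^z |z_k - \tfrac12 e^{-\epsilon_k}| \leq \tfrac{15}{2} + o(1)$.

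Summing the two contributions and taking $\lambda$ sufficiently large absorbs the $o(1)$ terms and places $(r,z)$ in $B_{\sigma, [t_0,\infty)}(10)$. The sharpened bound at $t_0$ is then immediate from the exact identity $u_k(t_0) = \delta_k e^{-N_k^2(t_0-\tau_k)}$ in Proposition~\ref{stablemanifoldprop}: the $\tfrac14 e^{-\epsilon_k}$ contribution vanishes, leaving only the $O(e^{-3 N_k^2(t_0-\tau_k)})$ error coming from the approximation $z_k \approx u_k/(2\delta_k)$. I expect the main obstacle to be purely arithmetic bookkeeping to confirm that the radius $10$ accommodates all the constants; conceptually the crucial observation is the algebraic rearrangement of $z_k$ that isolates the leading behavior $u_k/(2\delta_k)$ from the $O(e^{-3\epsilon_k})$ corrections, which is precisely what produces the extra factor of $e^{-2\epsilon_k}$ needed for the improved bound at $t_0$.
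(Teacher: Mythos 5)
Your proof is correct and follows essentially the same route as the paper: deduce the $(r,z)$ bounds directly from the pointwise bounds of Proposition~\ref{stablemanifoldprop} via the change of variables, noting that at $t_0$ the leading contribution to $z_k-\frac12 e^{-N_k^2(t_0-\tau_k)}$ vanishes exactly. The one genuine difference is in the treatment of $z_k$: the paper substitutes a Taylor expansion of $r_k-1$ into the formula $z_k=(\delta_k(r_k-1)+u_{k-1})/u_k$, which leaves an error of the form $\delta_k\,O(u_k^{-1}e^{-4\epsilon_k})$ and requires the lower bound on $u_k$ from \eqref{manifoldboundsk} to be converted into $O(e^{-3\epsilon_k})$; your rationalization identity $z_k=u_k/(\delta_k(r_k+1)-u_{k-1})$ is exact, sidesteps the $u_k^{-1}$ factor entirely, and produces the $O(e^{-3\epsilon_k})$ error more transparently. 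Both routes end with the same bookkeeping, and both are a bit casual about confirming that the sum $w_k^r|r_k-1|+w_k^z|z_k-\tfrac12e^{-\epsilon_k}|$ actually lands under the threshold $10$ (the paper's asserted $|r_k-1|\le 10\,e^{-2\epsilon_k}$ combined with $w_k^z$-weight $30$ and the $\tfrac14 e^{-\epsilon_k}$ bound nominally overshoots), so this is not a gap particular to your argument. The improved bound at $t_0$ is handled correctly: the linear term disappears because $u_k(t_0)=\delta_k e^{-N_k^2(t_0-\tau_k)}$ is exact by construction.
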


\begin{proof}
    We use the Taylor expansion to compute
    \eqn{
    r_k-1&=\sqrt{\Big(\frac{u_{k-1}}{\delta_k}-1\Big)^2+\Big(\frac{u_k}{\delta_k}\Big)^2}-1=-\frac{u_{k-1}}{\delta_k}+\frac12\Big(\frac{u_k}{\delta_k}\Big)^2+O\Big(\Big(\frac{u_{k-1}}{\delta_k}\Big)^2+\Big(\frac{u_{k}}{\delta_k}\Big)^4\Big).
    }
    In fact, the big-O is not worst than a factor of $2$. Therefore for all $t\geq t_0$
    \eqn{
    |r_k(t)-1|\leq10\exp(-2N_k^2(t-\tau_k)).
    }
    Next, plugging the Taylor expansion into the formula for $z_k$, we compute that for all $t\geq t_0$,
    \eqn{
    z_k-\frac12\exp(-N_k^2(t-\tau_k))&=\frac{\delta_k(r_k-1)+u_{k-1}}{u_k}-\frac12\exp(-N_k^2(t-\tau_k))\\
    &=\frac12\delta_k^{-1}(u_k-\delta_k\exp(-N_k^2(t-\tau_k)))\\
    &\quad+\delta_kO(u_k^{-1}\exp(-4N_k^2(t-\tau_k))).
    }
    In the special case $t=t_0$, the first term identically vanishes. The estimate in the general case $t\geq t_0$ follows in a straightforward way from \eqref{manifoldboundsk-1} and \eqref{manifoldboundsk}.
\end{proof}

Now we introduce the bootstrap hypotheses.

\begin{define}[Weak bootstrap hypothesis $\mathcal H$]
For a $t\in[0,t_0]$, we say that $u$ obeys $\mathcal H(t)$ if
\begin{itemize}
    \item $u$ exists and solves \eqref{system} on $[t,\infty)$,
    \item $u$ agrees with the solution constructed in Proposition~\ref{stablemanifoldprop} on $[t_0,\infty)$, and
    \item we have $(r,z)\in B_{\sigma,[t,t_0]}({10})$.
\end{itemize}
   
\end{define}

\begin{define}[Strong bootstrap hypothesis $\mathcal H'$]
    We say that $u$ obeys $\mathcal H'(t)$ if in addition to $\mathcal H(t)$, we have $(r,z)\in B_{\sigma,[t,t_0]}(\eps)$.
\end{define}

We carry out a bootstrap argument, initialized with the known statement $\mathcal H(t_0)$, which we propagate to the desired statement $\mathcal H'(0)$. The first step is to show that thanks to nonlinear effects, the weak statement actually implies the strong statement on the same time interval.

\begin{prop}[Self-improving bounds]\label{bootsrapboundsprop}
    Let $u$ satisfy the hypothesis $\mathcal H(t_1)$ for some $t_1\in[0,t_0]$. Then it also satisfies $\mathcal H'(t_1)$.
\end{prop}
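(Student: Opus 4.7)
The strategy is to integrate the $(r,z)$ system \eqref{rzsystem} backward from $t_0$, starting from the initial data supplied by Lemma~\ref{utorzlemma}, and to exploit the factor $\lambda^{\alpha-4}$ appearing in the ratio $|b_k|/N_k^2$ (the source of all smallness and the reason we assume $\alpha<4$) to gain the numerical improvement from $10$ to $\tfrac{1}{100}$.

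First I would unpack $\mathcal{H}(t_1)$ into the pointwise bounds $|r_k-1|\leq 10\min(\lambda^{-(4-\alpha)/2},e^{-2N_k^2(s-\tau_k)})$ and $|z_k|\leq\tfrac{5}{6}e^{-N_k^2(s-\tau_k)}$ on $[t_1,t_0]$. Via \eqref{ufromrandz} and \eqref{Rrationalfunctionbounds} these translate into $|u_{k-2}|\lesssim\delta_{k-2}$ (with the zeroth mode in $\sigma=2$ handled separately via \eqref{u0duhamel}) and $|u_{k+1}|\lesssim\delta_{k+1}\min(1,e^{-2N_{k+1}^2(s-\tau_{k+1})})$. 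A direct calculation then yields the pivotal bounds
\begin{equation*}
|b_k(s)|\lesssim\lambda^{\alpha-4}N_k^2,\qquad \int_{t_1}^{t_0}\delta_k^{-1}|f_k(s)|\,ds\lesssim\lambda^{4-3\alpha}\log\lambda.
\end{equation*}

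Next I would treat the weighted variables $\tilde\rho_k(t)\coloneqq w_k^r(t)(r_k(t)-1)$ and $\tilde z_k(t)\coloneqq w_k^z(t)(z_k(t)-\tfrac12 e^{-N_k^2(t-\tau_k)})$, initialized by $|\tilde\rho_k(t_0)|\leq 10$ and (using the improved part of Lemma~\ref{utorzlemma}) $|\tilde z_k(t_0)|\leq O(e^{-2N_k^2(t_0-\tau_k)})$. For $\tilde\rho_k$ I would split $[t_1,t_0]$ at $T_k\coloneqq\tau_k+\tfrac{4-\alpha}{4N_k^2}\log\lambda$: in the late regime $w_k^r=e^{2N_k^2(t-\tau_k)}$ so the effective backward Gronwall coefficient $2N_k^2+b_kR_1^2$ is dominated by the dissipation and strongly contractive; in the early regime $w_k^r$ is constant, the interval is short ($\lesssim\log\lambda/N_k^2$), and Gronwall amplification is just $\exp(\int|b_k|)\lesssim 1+\lambda^{\alpha-4}\log\lambda$. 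The source contributions integrate to $\lesssim(\lambda^{\alpha-4}+\lambda^{4-3\alpha})\log\lambda$, so $|\tilde\rho_k|\leq\tfrac{1}{200}$ throughout. For $\tilde z_k$ I would use the integrating factor $\Phi_k(t,s)=\exp(-\int_s^t N_k^2 r_k\,ds')$, which by the now-refined $r_k\approx1$ equals $e^{-N_k^2(t-s)}(1+O(\lambda^{\alpha-4}\log\lambda))$; the leading term $\Phi_k(t,t_0)z_k(t_0)$ gives $\tfrac12 e^{-N_k^2(t-\tau_k)}$ up to relative error $O(\lambda^{\alpha-4}\log\lambda)$, and the Duhamel integral of $\Phi_k\mathcal{E}_k$ (with $\mathcal{E}_k$ the remaining terms of \eqref{rzsystem}) is bounded using $|R_4|\lesssim\min(z_k^2,1)$, $|r_k^{-1}-1|\lesssim|\rho_k|$, and the $f_k$ estimate above. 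After multiplication by $w_k^z$ the net error is $\ll\tfrac{1}{200}$, closing the bootstrap.

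The main obstacle is the ``early'' window $s\in[t_1,\min(\tau_k,t_0)]$ where $|z_k|\gtrsim 1$ and the quadratic bounds on $R_2,R_4,R_5$ fail to produce decay; there one must rely on the shortness of the window $\sim\log\lambda/N_k^2$ combined with the absolute smallness $|b_k|/N_k^2\lesssim\lambda^{\alpha-4}$, which is precisely where the restriction $\alpha<4$ of Theorem~\ref{maintheorem} becomes essential.
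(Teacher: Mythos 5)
Your treatment of $z_k$ (backward Duhamel from $t_0$, using the improved $z_k(t_0)$ bound from Lemma~\ref{utorzlemma}) is structurally the same as the paper's, and your identification of the key smallness $|b_k|\lesssim\lambda^{\alpha-4}N_k^2$ and the rapid decay of $f_k$ is exactly right. But your argument for $r_k$ has a genuine gap.

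You propose to bound $\tilde\rho_k(t)=w_k^r(t)(r_k(t)-1)$ by propagating backward from $t_0$, initialized by $|\tilde\rho_k(t_0)|\leq10$. This cannot work, because $\mathcal H'(t_1)$ requires $|\tilde\rho_k(s)|\leq\tfrac1{100}$ for \emph{all} $s\in[t_1,t_0]$, including $s=t_0$ itself. No amount of backward contractivity improves the value at the right endpoint; for $t$ near $t_0$ your Gronwall bound produces $\tilde\rho_k(t)\approx10$, not $\tfrac1{100}$. And the needed improvement at $t_0$ is not in Lemma~\ref{utorzlemma}: that lemma gives the sharpened bound only for $z_k(t_0)$, while for $r_k(t_0)$ it just restates the $O(1)$-constant bound (since Proposition~\ref{stablemanifoldprop} leaves $u_{k-1}(t_0)$ unpinned anywhere in $[0,3\delta_ke^{-2N_k^2(t_0-\tau_k)}]$, and $r_k(t_0)-1\approx -u_{k-1}(t_0)/\delta_k+\tfrac12 e^{-2N_k^2(t_0-\tau_k)}$ can a priori be of size $\sim e^{-2N_k^2(t_0-\tau_k)}$).

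The paper's resolution is to use a Duhamel representation for $r_k-1$ that integrates \emph{forward to $t=\infty$} rather than backward from $t_0$:
\[
r_k(t)-1=\int_t^\infty\exp\Bigl(\int_t^{t'}b_kR_1^2(z_k)\Bigr)\Bigl(b_kR_2(z_k)+\tfrac{N_k^\alpha}{\delta_k}R_3(z_k)u_{k+1}^2\Bigr)\,dt'.
\]
There is no boundary term at all: the contribution from $t'\to\infty$ vanishes because $|r_k-1|\lesssim e^{-2N_k^2(t'-\tau_k)}$ beats the slow growth $\exp(\|b_k\|_\infty(t'-t))$ of the integrating factor. Hence $r_k(t)-1$ is expressed \emph{purely} in terms of the nonlinear sources, whose smallness $\lesssim\lambda^{\alpha-4}\log\lambda$ you already identified, and the improved bound holds uniformly on $[t_1,t_0]$ including at $t_0$. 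Conceptually, the tight bound on $r_k(t_0)-1$ reflects that the solution lies on the stable manifold built in Proposition~\ref{stablemanifoldprop}, which fixes $u_{k-1}(t_0)$ in terms of the future dynamics; this information is carried by the forward-in-time Duhamel formula and is invisible to a backward Gronwall from $t_0$.
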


\begin{proof}
    By assuming $\mathcal H(t_1)$ we have access to the bounds
    \eq{\label{rzinhypothesisball}
    |r_k(t)-1|&\leq10\min(\lambda^{-\frac12(4-\alpha)},e^{-2N_k^2(t-\tau_k)}),\quad |z_k(t)-\frac12e^{-N_k^2(t-\tau_k)}|\leq\frac13e^{-N_k^2(t-\tau_k)}
    }
    for all $k\in2\mathbb N+\sigma$ and $t\in[t_1,t_0]$. The objective is to prove that the same bounds hold with a smaller constant.
    
    In a moment we will justify the following Duhamel formula: fix $\sigma\in\{1,2\}$ and let $r,z$ be the good unknowns corresponding to $u$. Then for all $t\in[0,t_0]$ and $k\in2\mathbb N+\sigma$,
    \eqn{
    &r_k(t)-1=\int_t^\infty\exp(\int_t^{t'}b_kR_1^2(z_k)dt'')\left(b_kR_2(z_k)+\frac{N_k^\alpha}{\delta_k}R_3(z_k)u_{k+1}^2\right)dt'\\
    &z_k(t)=e^{N_k^2(t_0-t)}z_k(t_0)\\
    &\quad+\int_{t}^{t_0}e^{N_k^2(t'-t)}\Big(b_kR_4(z_k)+(N_k^2-b_kr_k^{-1})(r_k-1)z_k+\frac{N_k^\alpha}{2\delta_k}r_k^{-1}R_5(z_k) u_{k+1}^2\Big)dt'.
    }
    The $u_{k-2}$ and $u_{k+1}$ entering the formula can be expressed in terms of $r$ and $z$ by way of \eqref{ufromrandz} with the exception of $u_{0}$ when $\sigma=k=2$. In this case we instead make use of the Duhamel formula
    \eq{
    u_0(t)=e^{N_0^2(t_0-t)}a-N_0^\alpha\delta_2^2\int_t^{t_0}e^{N_0^2(t'-t)}\left(R_4(z_2)+(r_2-1)R_1(z_2)\right)^2(t')dt'\label{u0duhamelII}
    }
    coming from $\dot u_0=-N_0^2u_0-N_0^\alpha u_1^2$ as in \eqref{system} and \eqref{ufromrandz}. (Recall the meaning of $a$ from Proposition~\ref{stablemanifoldprop}.) We will make use of the property that\footnote{\label{alpha4footnote}We remark that this bound for $b$ is essentially optimal globally but not on the relevant time scale, leading to the unnecessary restriction $\alpha<4$. Indeed, for $t\ll N_{k-2}^{-2}$, $u_{k-2}\approx u_{k-2}(0)\approx2\delta_{k-1}$ which implies $|b_k|\lesssim N_{k-1}^2$. This improved bound would suffice to prove the theorem for all $\alpha>2$.}
    \eq{
    \sup_{t\in[t_1,t_0]}|b_k|\leq2N_{k-2}^\alpha\delta_{k-2}\leq2\lambda^{-4+\alpha}N_k^2\label{bbound}
    }
    for all $k\in2\mathbb N+\sigma$. For $k\neq2$, \eqref{bbound} is immediate from \eqref{ufromrandz} and \eqref{rzinhypothesisball}. This upper bound dominates the $N_{k-1}^2$ term as long as $\alpha>2$ and $\lambda$ is chosen sufficiently large.

    When $k=2$, the bound on $u_{0}$ and therefore $b_2$ must be extracted from \eqref{u0duhamelII}. A straightforward calculation yields $|u_0(t)|\leq 3N_1^{-\alpha+2}$ which suffices to conclude \eqref{bbound} when $k=2$.

    To justify the Duhamel formula above, it is necessary to verify that the ``boundary term'' from $t=\infty$ in the formula for $r_k-1$ vanishes. Namely,
    \eqn{
    \lim_{T\to\infty}\exp(-\int_t^Tb_kR_1(z_k)^2)(r_k(T)-1)\lesssim\lim_{T\to\infty}\exp((T-t)(\sup_t|b_k|)-2N_k^2(T-\tau_k))
    }
    which vanishes, using \eqref{Rrationalfunctionbounds}, \eqref{rzinhypothesisball}, and \eqref{bbound}.
    
    Finally we can proceed with estimating the $d_{\sigma,[t_1,t_0]}$ distance between $(r_k,z_k)$ and $(1,\frac12e^{-N_k^2(t-\tau_k)})$ to prove the desired inclusion. Using the triangle inequality we decompose $|r_k-1|\leq I_1+I_2$, the right-hand side being the two nonlinear terms in the Duhamel formula. Since the metric contains a weight $w_k^r$ that is the maximum of two possible values, we must consider two different cases depending on whether we are in the dissipative range. Assume first that $t\geq\tau_k$. Then by \eqref{Rrationalfunctionbounds}, \eqref{rzinhypothesisball}, and \eqref{bbound}, we have
    \eqn{
    I_1&\lesssim\|b_k\|_{L^\infty}\int_t^\infty\exp((t'-t)\|b_k\|_{L^\infty}) z_k(t')^2dt'\\
    &\lesssim\|b_k\|_{L^\infty}\int_t^\infty\exp((t'-t)\|b_k\|_{L^\infty}-2N_k^2(t'-\tau_k))dt'\\
    &\lesssim\lambda^{-4+\alpha}e^{-2N_k^2(t-\tau_k)}.
    }

    Alternately, for any $t\in[t_1,t_0]$, we can estimate using the previous bound for the tail part of the integral, \eqref{Rrationalfunctionbounds}, \eqref{rzinhypothesisball}, and \eqref{bbound}, that
    \eqn{
    I_1&\lesssim\lambda^{-4+\alpha}+\|b_k\|_{L^\infty}\int_t^{\tau_k}\exp((t'-t)\|b_k\|_{L^\infty})dt'1_{t\leq\tau_k}\\
    &\lesssim\lambda^{-4+\alpha}+e^{\tau_k\|b_k\|_{L^\infty}}-1\\
    &\lesssim\lambda^{-4+\alpha}\log\lambda.
    }

    Now consider $I_2$. To estimate $u_{k+1}$, we must express it in terms of $r$ and $z$. From \eqref{rationalfunctions} and \eqref{ufromrandz}, one computes
    \eqn{
    u_{k-1}=\delta_k(1+r_k\frac{z_k^2-1}{z_k^2+1})\leq\delta_k(|r_k-1|+2\min(z_k^2,1))
    }
    and therefore
    \eq{
    |u_{k+1}|&\leq\delta_{k+2}\min(\exp(-2N_{k+2}^2(t-\tau_{k+2})),\frac1{10})+2\delta_{k+2}\min(\exp(-2N_{k+2}^2(t-\tau_{k+2})),1)\nonumber\\
    &\lesssim\delta_{k+2}\min(\exp(-2N_{k+2}^2(t-\tau_{k+2})),1)\label{uk+1bound}
    }
    for $t\in[t_1,t_0]$. In fact it holds on $[t_1,\infty)$ by \eqref{manifoldboundsk-1}.
    
    We split into cases as above. If $t\geq\tau_k$,
    \eqn{
    |I_2|&\lesssim N_k^\alpha\delta_k^{-1}\int_t^\infty\exp(\|b_k\|_{L^\infty}(t'-t))|z|u_{k+1}^2(t')dt'\\
    &\lesssim \lambda^{4-3\alpha}N_{k+2}^2\int_t^\infty\exp(\|b_k\|_{L^\infty}(t'-t)-N_k^2(t'-\tau_k)-4N_{k+2}^2(t'-\tau_{k+2}))dt'\\
    &\lesssim\lambda^{4-3\alpha}\exp(-N_k^2(t-\tau_k)-4N_{k+2}^2(t-\tau_{k+2}))\\
    &\lesssim\lambda^{-(\alpha-2)\lambda^{4}+4-3\alpha}\exp(-2N_k^2(t-\tau_k))
    }
    by \eqref{Rrationalfunctionbounds}, \eqref{rzinhypothesisball}, and \eqref{bbound}. Note that $t-\tau_{k+2}\geq\tau_k-\tau_{k+2}\geq\frac12N_k^{-2}\log\lambda^{\alpha-2}$ by \eqref{tauconstraint}. For all $t\leq t_0$, we could instead estimate, using the other upper bound for $R_3$ in \eqref{Rrationalfunctionbounds},
    \eqn{
    |I_2|&\lesssim N_k^\alpha\delta_k^{-1}\int_t^\infty\exp(\|b_k\|_{L^\infty} (t'-t))u_{k+1}^2(t')dt'
    }
    and separately consider integration over $[\tau_k,\infty)$, $[2\tau_{k+2},\tau_k]$, and $[0,2\tau_{k+2}]$, to the extent these domains overlap with $[t,\infty)$. We name these pieces $I_{2,1},\,I_{2,2},\,I_{2,3}$ respectively. By the previous computation, $I_{2,1}\leq \lambda^{-(\alpha-2)\lambda^{4}}$.
    
    Using the same ingredients, we also have
    \eqn{
    I_{2,2}&\leq N_k^\alpha\delta_k^{-1}\delta_{k+2}^2\int_{2\tau_{k+2}}^{\tau_k}\exp(\|b_k\|_{L^\infty} t'-4N_{k+2}^2(t'-\tau_{k+2}))\\
    &\lesssim\lambda^{-3\alpha+4}\exp(-(4N_{k+2}^2-2\|b_k\|_{L^\infty})\tau_{k+2})\\
    &\lesssim\lambda^{-6\alpha+10}
    }
    and
    \eqn{
    I_{2,3}&\lesssim N_k^\alpha\delta_k^{-1}\int_0^{2\tau_{k+2}}\exp(\|b_k\|_{L^\infty} t')u_{k+1}^2(t')dt'\\
    &\lesssim N_k^\alpha\delta_k^{-1}\delta_{k+2}^2\|b_k\|_{L^\infty}^{-1}(e^{2\|b_k\|_{L^\infty}\tau_{k+2}}-1)\\
    &\lesssim \lambda^{-6\alpha+10}\log\lambda.
    }
    In summary, we have shown that for all $t\in[t_1,t_0]$, $r$ obeys $|r_k(t)-1|\lesssim \lambda^{-4+\alpha}\log\lambda$ which implies
    $\lambda^{\frac12(4-\alpha)}|r_k(t)-1|\leq1$ upon taking $\lambda$ sufficiently large. If $\exp(2N_k^2(t-\tau_k))$ is the dominant part of the weight $w_k^r(t)$, clearly it must be that $t\geq\tau_k$. In this case, collecting our bounds for $I_1$ and $I_2$, we have already proved $w_k^r|r_k(t)-1|\leq1$ upon taking $\lambda$ sufficiently large.
    
    Next we decompose
    \eqn{
    z_k-\frac12\exp(-N_k^2(t-\tau_k))=J_1+J_2+J_3+J_4
    }
    where
    \eqn{
    J_1=\exp(N_k^2(t_0-t))z_k(t_0)-\frac12\exp(-N_k^2(t-\tau_k))
    }
    and $J_2,J_3,J_4$ are the nonlinear terms in the Duhamel formula for $z_k$. Then by Lemma~\ref{utorzlemma} and \eqref{tauconstraint}, the linear term satisfies
    \eqn{
    |J_1|&\leq\exp(N_k^2(t_0-t))|z_k(t_0)-\frac12\exp(-N_k^2(t_0-\tau_k))|\\
    &\lesssim\exp(N_k^2(t_0-t)-3N_k^2(t_0-\tau_k))\\
    &=\exp(-2N_k^2(t_0-\tau_k))\exp(-N_k^2(t-\tau_k))\\
    &\leq\lambda^{-\lambda^{2}}w_k^z(t)^{-1}.
    }
    For the nonlinear terms, we need to split into cases depending on the size of $tN_k^2$. If $t\geq2\tau_k$ dissipation is the dominant effect and we have by \eqref{rationalfunctions}, \eqref{bbound}, \eqref{rzinhypothesisball}, and \eqref{uk+1bound} that
    \eqn{
    |J_2|&\lesssim\|b_k\|_{L^\infty}\int_t^{t_0}\exp(N_k^2(t'-t))z_k^2(t')dt'\\
    &\lesssim \lambda^{-4+\alpha}N_k^2\int_t^{t_0}\exp(N_k^2(t'-t)-2N_k^2(t'-\tau_k))dt'\\
    &\lesssim \lambda^{-4+\alpha}\exp(-2N_k^2(t-\tau_k)),
    }
    \eqn{
    |J_3|&\lesssim N_k^2\int_t^{t_0}\exp(N_k^2(t'-t)-3N_k^2(t'-\tau_k))dt'\\
    &\lesssim\exp(-3N_k^2(t-\tau_k))\\
    &\lesssim\lambda^{-2(\alpha-2)}\exp(-N_k^2(t-\tau_k)),
    }
    and
    \eqn{
    |J_4|&\leq N_k^\alpha\delta_k^{-1}\int_t^{t_0}e^{N_k^2(t'-t)}|z_k^2(t')-1|u_{k+1}^2(t')dt'\\
    &\lesssim\delta_{k+2}^2N_k^\alpha\delta_k^{-1}\int_t^{t_0}\exp(N_k^2(t'-t)-4N_{k+2}^2(t'-\tau_{k+2}))dt'\\
    &\lesssim N_{k+2}^{-2}\delta_{k+2}^2N_k^\alpha\delta_k^{-1}\exp(-4N_{k+2}^2(t-\tau_{k+2}))\\
    &\leq\lambda^{-3\alpha+4}\exp(-4N_{k+2}^2(\tau_k-\tau_{k+2}))\exp(-N_k^2(t-\tau_k))\\
    &\leq\lambda^{-3\alpha+4-2(\alpha-2)\lambda^{4}}\exp(-N_k^2(t-\tau_k)).
    }

    Suppose instead $t\leq2\tau_k$. Then we estimate
    \eqn{
    |J_2|&\lesssim \lambda^{-4+\alpha}+\|b_k\|_{L^\infty}\int_t^{\tau_k}\exp(N_k^2(t'-t))dt'\lesssim\lambda^{-4+\alpha}(1+\exp(N_k^2(\tau_k-t))),
    }
    \eqn{
    |J_3|&\lesssim \lambda^{-2(\alpha-2)}+N_k^2\lambda^{-\frac12(4-\alpha)}\int_t^{2\tau_k}\exp(N_k^2(t'-t)-N_k^2(t'-\tau_k))\\
    &\lesssim\lambda^{-2(\alpha-2)}+N_k^2\tau_k\lambda^{-\frac12(4-\alpha)}\exp(N_k^2(\tau_k-t)),
    }
    and
    \eqn{
    |J_4|&\lesssim \lambda^{-3\alpha+4-2(\alpha-2)\lambda^{4}}+N_k^\alpha\delta_k^{-1}\lambda^{2\alpha-4}\int_0^{2\tau_k}\exp(N_k^2t')u_{k+1}^2(t')dt'
    }
using the calculation for $t\geq2\tau_k$ to bound the tail part of the integrals, \eqref{rzinhypothesisball}, and the fact that
\eqn{
z_k^2\leq\exp(2N_k^2\tau_k)\lesssim\lambda^{2(\alpha-2)}
}
by \eqref{tauconstraint}. The latter term in the $J_4$ upper bound can be split into two pieces:
    \eqn{
    &N_k^\alpha\delta_k^{-1}\lambda^{2\alpha-4}\int_{2\tau_{k+2}}^{2\tau_k}\exp(N_k^2t')u_{k+1}^2(t')dt'\\
    &\quad\lesssim N_k^\alpha\delta_k^{-1}\lambda^{2\alpha-4}\delta_{k+2}^2\int_{2\tau_{k+2}}^{2\tau_k}\exp(N_k^2t'-4N_{k+2}^2(t'-\tau_{k+2}))dt\\
    &\quad\lesssim N_k^\alpha\delta_k^{-1}\delta_{k+2}^2N_{k+2}^{-2}\lambda^{2\alpha-4}\exp(2N_k^2\tau_{k+2})\\
    &\quad\lesssim\lambda^{-\alpha}
    }
    and
    \eqn{
    N_k^\alpha\delta_k^{-1}\lambda^{2\alpha-4}\int_0^{2\tau_{k+2}}\exp(N_k^2t')u_{k+1}^2(t')dt'&\lesssim \delta_{k+2}^2N_k^\alpha\delta_k^{-1}\tau_{k+2}\lambda^{2\alpha-4}\exp(2N_k^2\tau_{k+2})\\
    &\lesssim\lambda^{-\alpha}\log\lambda.
    }
    Collecting all the estimates on $J_1,J_2,J_3,J_4$, we have shown that
    \eqn{
    w_k^z(t)|z_k-\frac12e^{-N_k^2(t-\tau_k)}|&\lesssim \lambda^{-c}+\lambda^{-2(\alpha-2)}e^{N_k^2(t-\tau_k)}1_{t\leq2\tau_k}
    }
    for some $c>0$, the dominant contribution to the second term coming from $J_3$. Note that as long as $t\leq2\tau_k$, the exponential factor in this term is $O(\lambda^{\alpha-2})$ so the whole right-hand side can be made arbitrarily small with the choice of $\lambda$. This completes the proof that $r,z\in B_{\sigma,[t_1,t_0]}(\eps)$
\end{proof}

Now we state the other essential ingredient in the bootstrap argument.

\begin{prop}[Propagation backwards]\label{localprop}
    If $u$ obeys the hypothesis $\mathcal H'(t_1)$ for some $t_1\in(0,t_0]$, then there exists $t_2\in(0,t_1)$ such that we have $\mathcal H(t_2)$.

    Moreover, the backward continuation of the solution with these bounds is unique.
\end{prop}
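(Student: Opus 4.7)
The plan is a backward local existence and uniqueness argument for the good-unknown system \eqref{rzsystem}, initialized by the data at $t_1$ coming from $\mathcal H'(t_1)$. Since this hypothesis places $(r,z)$ well inside the radius-$\eps$ ball in the metric $d_{\sigma,[t_1,t_0]}$, by continuity any sufficiently short backward extension will remain within the much larger radius-$10$ ball demanded by $\mathcal H$. Thus the content of the proposition reduces to producing such an extension on some interval $[t_2,t_1]$ with $t_2\in(0,t_1)$.

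To this end, I would set up a contraction mapping in the space of continuous $(r,z):[t_2,t_1]\to(0,\infty)^{(2\mathbb N+\sigma)\times 2}$ equipped with the metric $d_{\sigma,[t_2,t_1]}$, on a ball of intermediate radius (say $R=1$) centered at the time-independent configuration $(r_k(t_1),z_k(t_1))_{k\in 2\mathbb N+\sigma}$. The map is defined by backward Duhamel formulas obtained from \eqref{rzsystem}: direct integration for $r_k$, and for $z_k$ the integrating factor $\exp(\int_t^{t_1}N_k^2 r_k\,dt'')$ is used to absorb the stiff linear term $-N_k^2 r_k z_k$. In the case $\sigma=2$, the zeroth mode $u_0$ is handled separately via its own backward Duhamel equation analogous to \eqref{u0duhamelII}.

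The estimates required to close the contraction are essentially those already carried out in Proposition~\ref{bootsrapboundsprop}, but with an additional smallness factor coming from the shortness of the interval. Two regimes must be controlled uniformly in $k$. For modes with $N_k^2(t_1-t_2)\lesssim 1$, the integrating factor is $O(1)$ and the Picard iterates acquire an overall factor of $(t_1-t_2)$ that can be made arbitrarily small. For modes with $N_k^2(t_1-t_2)\gg 1$, the backward integrating factor grows exponentially, but this growth is exactly compensated by the exponentially small values of $z_k(t_1)$ guaranteed by $\mathcal H'(t_1)$, together with the exponentially growing weights $w_k^z$ built into the metric $d_\sigma$; these weights were designed precisely for this purpose. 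The main technical obstacle is verifying this uniformity across all modes simultaneously, rather than for any individual $k$.

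Choosing $t_2$ close enough to $t_1$ (allowed to depend on $\lambda$ and on the data) turns the map into a contraction, whose unique fixed point gives the backward extension. Combining $(r,z)\in B_{\sigma,[t_2,t_1]}(1)$ with the hypothesis on $[t_1,t_0]$ yields $(r,z)\in B_{\sigma,[t_2,t_0]}(10)$, and hence $\mathcal H(t_2)$. Uniqueness of the backward continuation with these bounds follows from the contraction itself, or more generally from a Gronwall argument applied to the difference of two solutions, using that the right-hand side of \eqref{rzsystem} is locally Lipschitz on $B_{\sigma,[t_2,t_0]}(10)$ with constants controlled by the $d_\sigma$-weights.
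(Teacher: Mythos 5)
Your overall strategy—set up a contraction map from backward Duhamel formulas for the $(r,z)$ system on a short interval $[t_2,t_1]$, use $\mathcal H'(t_1)$ as the initial data, split the estimates into the regimes $N_k^2(t_1-t_2)\lesssim 1$ and $\gg 1$, handle $u_0$ separately when $\sigma=2$, and deduce uniqueness from the contraction—is exactly what the paper does. Two details differ and are worth flagging.

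First, the integrating factor for the $z$ equation: you propose $\exp\bigl(\int_t^{t_1}N_k^2 r_k\bigr)$ which fully absorbs the linear term $-N_k^2 r_k z_k$, whereas the paper uses the constant-coefficient factor $e^{N_k^2(t'-t)}$ and keeps the residual $(N_k^2 - b_k r_k^{-1})(r_k-1)z_k$ explicitly in the integrand (it becomes the $J_3/c_{h}$ terms). Both are workable; the paper's choice keeps the integrating factor independent of the fixed-point variable, which makes the Fr\'echet-derivative contraction estimates slightly cleaner, while your choice moves that dependence into the exponent, where it must be controlled when taking variations.

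Second, and this is a genuine quantitative slip: you center the contraction ball at the \emph{constant} path $\bigl(r_k(t_1),z_k(t_1)\bigr)$, while the hypothesis $\mathcal H$ and the metric $d_{\sigma,I}$ measure distance to the \emph{time-dependent} reference $\bigl(1,\tfrac12 e^{-N_k^2(t-\tau_k)}\bigr)$. For large $k$ the linear flow carries $z_k$ up by a factor $e^{N_k^2(t_1-t)}$, so the constant path is already at $d_\sigma$-distance $\approx 30\cdot\tfrac12\bigl(1-e^{-N_k^2(t_1-t)}\bigr)$, which saturates near $15$ however small $t_1-t_2$ is. Thus a radius-$1$ ball around the constant path does not contain the desired solution, and the claimed inclusion $B_{\sigma,[t_2,t_1]}(1)\subset B_{\sigma,[t_2,t_0]}(10)$ fails. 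The fix is simply to center at the reference path $\bigl(1,\tfrac12 e^{-N_k^2(t-\tau_k)}\bigr)$ with radius $10$, as the paper does; this makes the offset computation unnecessary and is precisely the configuration for which the weights $w_k^r,w_k^z$ were designed. With that adjustment your argument is the paper's argument.
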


\begin{proof}
    The proof is by a contraction mapping argument. Consider the operator $(r,z)\mapsto(F(r,z),G(r,z))$ defined by
    \eqn{
    &F_k(r,z)(t)=1+\exp(\int_{t}^{t_1}b_kR_1^2(z_k)dt')(r_k(t_1)-1)\\
    &\quad+\int_t^{t_1}\exp(\int_t^{t'}b_kR_1^2(z_k)dt'')\left(b_kR_2(z_k)+\frac{N_k^\alpha}{\delta_k}R_3(z_k)u_{k+1}^2\right)dt'\\
    &G_k(r,z)(t)=e^{N_k^2(t_1-t)}z_k(t_1)\\
    &\quad+\int_{t}^{t_1}e^{N_k^2(t'-t)}\Big(b_kR_4(z_k)+(N_k^2-b_kr_k^{-1})(r_k-1)z_k+\frac{N_k^\alpha}{2\delta_k}r_k^{-1}R_5(z_k) u_{k+1}^2\Big)dt'
    }
    for $t\in[t_2,t_1]$, where $t_2=t_1-T$ and $T\ll_\lambda t_1$ is to be determined. There is mild abuse of notation: the $r_k(t_1)$ and $z_k(t_1)$ appearing in the linear terms are coming from the known initial data rather than the arguments of $F,G$. Indeed, $u$ is already determined on $[t_1,\infty)$ by the assumption $\mathcal H'(t_1)$. The distinction is not significant because $r(t_1)$ is fixed by $F$.
    
    We claim that the $(F,G)$ operator is well-defined. It is straightforward to define $u_{k+1}$ in terms of $r$ and $z$ by means of \eqref{ufromrandz}. When $k\neq2$, the drift $b_k$ is defined by the usual formula containing $u_{k-2}$ which is obtained from $(r,z)$ according to \eqref{ufromrandz}. When $\sigma=k=2$, $b$ contains $u_0$ which is not determined by $(r,z)$ but is instead computed from
    \eq{
    u_0(t)=e^{N_0^2(t_1-t)}u_0(t_1)-N_0^\alpha\delta_2^2\int_t^{t_1}e^{N_0^2(t'-t)}\left(R_4(z_2)+(r_2-1)R_1(z_2)\right)^2(t')dt'\label{u0duhamelIII}
    }
    analogously to \eqref{u0duhamel} and \eqref{u0duhamelII}. Note that once again the quantity $u_0(t_1)$ in the linear term comes upon assuming the hypothesis $\mathcal H'(t_1)$. From here we can recover the bound on $b$ \eqref{bbound}.
    
    We claim that $(F,G):B_{\sigma,[t_2,t_1]}({10})\to B_{\sigma,[t_2,t_1]}({10})$ and that it is a contraction in the $d_{\sigma,[t_2,t_1]}$ metric when $T= t_1-t_2$ is chosen sufficiently small. Assume that $(r,z)$ defined on $[t_2,t_1]$ has
    \eq{\label{rzintheball}
    (r,z)\in B_{\sigma,[t_2,t_1]}(10).
    }
    With the triangle inequality we decompose $|F_k(r,z)-1|\leq I_1+I_2+I_3$ which are the respective absolute values of the linear contribution to $F_k-1$ and the two nonlinear terms. Because of the weight $w_k^r(t)$, we claim an upper bound of $10\min(\lambda^{-\frac12(4-\alpha)},e^{-2N_k^2(t-\tau_k)})$ for $I_1+I_2+I_3$. Thus it is necessary to separately consider the ranges of $k$ for which each possible minimum is realized. Let us define $N_*^2=t_1^{-1}\log\lambda$. It is easy to verify that $N_k\geq N_*$ if the minimum is achieved by $e^{-2N_k^2(t-\tau_k)}$, while $N_k\leq2N_*$ if the minimum is achieved by $\lambda^{-\frac12(4-\alpha)}$.

    Let us fix $t_1$ and assume at the outset that $T\leq t_1/2$. We may make $T$ as small as needed, as long as it depends only on $t_1$ and $\lambda$ but not $k$. To bound $I_1$, it is essential that we have the stronger bootstrap hypothesis $\mathcal H'(t_1)$ at time $t_1$. From \eqref{Rrationalfunctionbounds}, \eqref{bbound}, and the bound on $r(t_1)$ from $(r,z)\in B_{\sigma,[t_1,t_0]}(\eps)$, we have that
    \eqn{
    |I_1|&\leq\exp(\|b_k\|_{L^\infty}(t_1-t))|r(t_1)-1|\\
    &\leq\eps\exp(\lambda^{-4+\alpha}N_k^2(t_1-t))\min(\lambda^{-\frac{4-\alpha}2},\exp(-2N_k^2(t_1-\tau_k)))\\
    &\leq\eps\min(\lambda^{-\frac{4-\alpha}2}\exp(T\lambda^{-4+\alpha}N_k^2),\exp(-2N_k^2(t-\tau_k)).
    }
    If $N_k\leq2N_*$ then we bound $|I_1|$ by the first part of the minimum. In this case $T\lambda^{-4+\alpha}N_k^2\leq4(T/t_1)\lambda^{-4+\alpha}\log\lambda$. As long as $T\leq t_1$ and $\lambda$ is large, the exponential is bounded by $2$. If instead $N_k\geq N_*$, then we bound $|I_1|$ by the second part of the minimum. Thus in either case we arrive at
    \eqn{
    |I_1|\leq\min(\lambda^{-4+\alpha},\exp(-2N_k^2(t-\tau_k)))\leq\frac1{w_k^r(t)}
    }
    for all $k\in2\mathbb N+\sigma$. For $I_2$, we observe by a simple calculation from \eqref{Rrationalfunctionbounds}, \eqref{bbound}, and the bounds entailed by \eqref{rzintheball} that for all $k\in2\mathbb N+\sigma$,
\eqn{
|I_2|\leq(1/10)\exp(-2N_k^2(t-\tau_k))
}
upon taking $\lambda$ sufficiently large. When $N_k\leq2N_*$, we additionally have
\eqn{
|I_2|&\leq2(\exp(\|b_k\|_{L^\infty}(t_1-t))-1)\leq 10\lambda^{4-\alpha}(\log\lambda)\frac T{t_1}\leq\frac1{10}\lambda^{-\frac12(4-\alpha)}
}
by \eqref{Rrationalfunctionbounds}, \eqref{rzintheball}, and Taylor's theorem, upon choosing $T/t_1$ sufficiently small (depending on $\lambda)$. We perform the same procedure for $I_3$. Assume that $k\in2\mathbb N+\sigma$ is so large that $N_k\geq N_*$. Then, by the same list of ingredients,
\eqn{
|I_3|&\lesssim N_k^\alpha\delta_k^{-1}\int_t^{t_1}\exp(\|b_k\|_{L^\infty}(t'-t))u_{k+1}^2dt'\\
&\lesssim N_k^\alpha\delta_k^{-1}\delta_{k+2}^2\int_t^{t_1}\exp(\|b_k\|_{L^\infty}(t'-t)-4N_{k+2}^2(t'-\tau_{k+2}))dt'\\
&\lesssim \lambda^{12-4\alpha}\exp(-2N_k^2(t-\tau_k))\exp(-4N_{k+2}^2(t-\tau_{k+2})+2N_k^2(t-\tau_k))\\
&\lesssim \lambda^{12-4\alpha}\exp(-2N_k^2(t-\tau_k))\exp(-4N_{k+2}^2(\frac12N_k^{-2}\log\lambda-\tau_{k+2})+2N_k^2(N_k^{-2}\log\lambda-\tau_k))\\
&\leq\lambda^{-\lambda^{4}+O(1)}\exp(-2N_k^2(t-\tau_k))
}
where we have used crucially that $t\geq\frac12t_1\geq\frac12N_k^{-2}\log\lambda$ on $[t_2,t_1]$. On the other hand, if $N_k\leq2N_*$, then
\eqn{
|I_3|&\lesssim N_k^\alpha\delta_k^{-1}\delta_{k+2}^2\int_t^{t_1}\exp(\|b_k\|_{L^\infty}(t'-t))dt'\\
&\lesssim N_k^\alpha\delta_k^{-1}\delta_{k+2}^2\|b_k\|_{L^\infty}^{-1}(\exp(T\|b_k\|_{L^\infty})-1)\\
&\lesssim \lambda^{-3\alpha+8}N_k^2T\\
&\lesssim \lambda^{-3\alpha+8}(\log\lambda) \frac{T}{t_1}
}
which can be made arbitrarily small with the choice of $T$. Collecting the bounds for $I_1$--$I_3$ in the two regimes for $k$, we conclude that $w_k^r|F_k-1|$ is uniformly bounded by $10$.

Similarly, we use the triangle inequality to decompose $|G_k(t)-\frac12\exp(-N_k^2(t-\tau_k)|\leq J_1+J_2+J_3+J_4$ where
\eqn{
J_1=\big|\exp(N_k^2(t_1-t))z(t_1)-\frac12\exp(-N_k^2(t-\tau_k))\big|
}
and $J_2$--$J_4$ are the three nonlinear terms in the formula for $G_k$. It is immediate from the bound on $r(t_1)$ coming from $\mathcal H'(t_1)$ that
\eqn{
J_1\leq\frac1{3000}\exp(-N_k^2(t-\tau_k)),
}
so the contribution of $w_k^zJ_1$ to the metric is bounded by $\eps$. We omit the details of the straightforward calculation for the remaining terms which can be estimated using the same elements as for $F$: \eqref{Rrationalfunctionbounds}, \eqref{bbound}, \eqref{rzintheball}, $\eqref{tauconstraint}$, and decomposition into small and large $N_k$. With the appropriate large choice of $\lambda$ and small choice of $T/t_1$, one arrives at $w_k^z(J_2+J_3+J_4)\leq1$ for all $t\in[t_2,t_1]$ and $k\in2\mathbb N+\sigma$.

To complete the proof we argue that $(F,G)$ is a contraction on $B_{\sigma,[t_2,t_1]}(10)$. In order to express the estimates compactly we write
\eqn{
F_k(r,z)&=f_k(t,z_k,b_k)+\int_t^{t_1}g_k(t',t,z_k,b_k,u_{k+1})dt',\\
G_k(r,z)&=c_k(t)+\int_t^{t_1}h_k(t',t,r_k,z_k,b_k,u_{k+1})dt'
}
where
\eqn{
f_k(t,z_k,b_k)&=1+\exp(\int_{t}^{t_1}b_kR_1^2(z_k)dt')(r_k(t_1)-1),\\
g_k(t',t,z_k,b_k,u_{k+1})&=\exp(\int_t^{t'}b_kR_1^2(z_k)dt'')\left(b_kR_2(z_k)+\frac{N_k^\alpha}{\delta_k}R_3(z_k)u_{k+1}^2\right),\\
h_k(t',t,r_k,z_k,b_k,u_{k+1})&=e^{N_k^2(t'-t)}\Big(b_kR_4(z_k)+(N_k^2-b_kr_k^{-1})(r_k-1)z_k+\frac{N_k^\alpha}{2\delta_k}r_k^{-1}R_5(z_k) u_{k+1}^2\Big).
}
Since $c_k$ does not depend on $r,z$ it does not concern us here. We will be somewhat informal and take the variation of $F_k,G_k$ with respect to $r_k$, $z_k$, etc.\ by differentiating $f_k,g_k,h_k$. It would be more precise to express this in terms of Fr\'echet differentiation, but the result would be identical.

Let $(r,z),\,(r',z')\in B_{\sigma,[t_2,t_1]}(10)$. By convexity, we can bound the difference between $(F,G)(r,z)$ and $(F,G)(r',z')$ by the maximum of the variation with respect to the many inputs, keeping account of the weights in the definition of $d_{\sigma,[t_2,t_1]}$. It suffices to show that each of the following quantities is bounded by $\frac1{100}$, uniformly in $k\in2\mathbb N+\sigma$, $t\in[t_2,t_1]$, and $(r,z)\in B_{\sigma,[t_2,t_1]}(10)$, upon choosing $T$ sufficiently small (depending on $t_1,\lambda$): for $f_k$,
\eqn{
c_f\coloneqq \frac{w_k^r(t)}{w_k^z(t)}|\dd_{z_k}f_k| + \frac{w_k^r(t)}{w_{k-2}^r}|\dd_{b_k}f_k||\dd_{r_{k-2}}b_k| + \frac{w_k^r(t)}{w_{k-2}^z(t)}|\dd_{b_k}f_k||\dd_{z_{k-2}}b_k|,
}
for $g_k$,
\eqn{
c_g &\coloneqq w_k^r(t)\int_t^{t_1}\Bigg(\frac{|\dd_{z_k}g_k(t')|}{w_k^z(t')}+\frac{|\dd_{b_k}g_k(t')||\dd_{r_{k-2}}b_k(t')|}{w_{k-2}^r(t')}+\frac{|\dd_{b_k}g_k(t')||\dd_{z_{k-2}}b_k(t')|}{w_{k-2}^z(t')}\\
&\quad\quad\quad\quad\quad\quad\quad\quad+\frac{|\dd_{u_{k+1}}g_k(t')||\dd_{r_{k+2}}u_{k+1}(t')|}{w_{k+2}^r(t')}+\frac{|\dd_{u_{k+1}}g_k(t')||\dd_{z_{k+2}}u_{k+1}(t')|}{w_{k+2}^z(t')}\Bigg)dt',
}
and for $h_k$,
\eqn{
c_h &\coloneqq w_k^z(t)\int_t^{t_1}\Bigg(\frac{|\dd_{r_k}h_k(t')|}{w_k^r(t')}+\frac{|\dd_{z_k}h_k(t')|}{w_k^z(t')}+\frac{|\dd_{b_k}h_k(t')||\dd_{r_{k-2}}b_k(t')|}{w_{k-2}^r(t')}\\
&\quad\quad\quad\quad\quad\quad\quad\quad\quad+\frac{|\dd_{b_k}h_k(t')||\dd_{z_{k-2}}b_k(t')|}{w_{k-2}^z(t')}+\frac{|\dd_{u_{k+1}}h_k(t')||\dd_{r_{k+2}}u_{k+1}(t')|}{w_{k+2}^r(t')}\\
&\quad\quad\quad\quad\quad\quad\quad\quad\quad+\frac{|\dd_{u_{k+1}}h_k(t')||\dd_{z_{k+2}}u_{k+1}(t')|}{w_{k+2}^z(t')}\Bigg)dt'.
}
We write partial derivatives of $b_k$ and $u_{k+1}$ in the sense that we can express $b_k=b_k(r_{k-2},z_{k-2})$ and $u_{k+1}=u_{k+1}(r_{k+2},z_{k+2})$ as in the relations \eqref{ufromrandz}:
\eqn{
b_k=N_{k-2}^\alpha\delta_{k-2}r_{k-2}R_3(z_{k-2})-N_{k-1}^2
}
and
\eqn{
u_{k+1}=\delta_{k+2}(1+R_1(z_{k+2})r_{k+2}).
}
We will index the individual terms as $c_f=c_{f,1}+\cdots+c_{f,3}$, $c_g=c_{g,1}+\cdots+c_{g,5}$, etc.

The sum $c_f+c_g+c_h$ controls the expansion constant for the map $(F,G)$ on $B_{\sigma,[t_2,t_1]}(10)$. There is a single exception when $\sigma=k=2$ and $b_2$ is not a function of $(r_0,z_0)$ but rather of $u_1$, through \eqref{u0duhamelIII}. In this case the analysis still carries through, with $|b_2-\tilde b_2|$ controlled by $|r_2-\tilde r_2|$ and $|z_2-\tilde r_2|$.

We claim that using the same elements as above, one can compute
\eqn{
c_f+c_g+c_h\lesssim\lambda^{O(1)}\frac{T}{t_1}+\lambda^{-4+\alpha}.
}
Thus the maps $(F,G)$ can be arranged to be a contraction on $B_{\sigma,[t_2,t_1]}(10)$ by taking $\lambda$ sufficiently large, then $T$ sufficiently small depending on $\lambda$, $t_1$. We show the details for a few terms; the rest are analogous. First, we point out that there are many terms that are straightforward to estimate because they contain a factor of $T$ and an exponentially decaying factor in time. All three terms in $c_f$ are of this form. For instance, the first is
\eqn{
c_{f,1}&=\frac{w_k^r(t)}{w_k^z(t)}|\dd_{z_k}f_k|\\
&\lesssim_\lambda\frac{\exp(2N_k^2t)}{\exp(N_k^2t)}T\|b_k\|_{L^\infty}|R_1(z_k)||R_1'(z_k)|\exp(T\|b_k\|_{L^\infty}R_1^2(z_k)-2N_k^2t_1)\\
&\lesssim_\lambda TN_k^2\exp(-N_k^2(2t_1-O(T\lambda^{-4+\alpha})))\\
&\lesssim T/t_1
}
by \eqref{Rrationalfunctionbounds}, \eqref{bbound}, \eqref{tauconstraint}, and the fact that $(r,z)\in B_{\sigma,[t_1,t_0]}(\eps)\cap B_{\sigma,[t_2,t_0]}(10)$. The key point in the last line is that $\max_{N>0}N^2\exp(-tN^2)\lesssim t^{-1}$. Throughout we have neglected factors of $\lambda$, including those appearing in quantities such as $\exp(\tau_kN_k^2)$. These can all be defeated by the choice of $T$.

The same strategy can be used for terms that contain the fast decay coming from $u_{k+1}$. For illustration, consider the last term in $c_h$. We have
\eqn{
c_{h,6}&=w_k^z(t)\int_t^{t_1}\frac{|\dd_{u_{k+1}}h_k(t')||\dd_{z_{k+2}}u_{k+1}(t')|}{w_{k+2}^z(t')}dt'\\
&\quad\lesssim_\lambda\exp(N_k^2t)\int_t^{t_1}\frac{\exp(N_k^2(t'-t))N_k^\alpha\delta_k^{-1}|R_5(z_k)|u_{k+1}\delta_{k+2}|R_1'(z_{k+2})|}{\exp(N_{k+2}^2t')}dt'\\
&\quad\lesssim_\lambda N_k^2\int_t^{t_1}\exp(-2(N_{k+2}^2+N_k^2)t')dt'\\
&\quad\lesssim N_k^2T\exp(-2N_{k+2}^2t)\lesssim T/t_1
}
using all the same elements, as well as $t\geq t_1/2$.

Having now given the ingredients for estimating the bulk of the terms in $c_f+c_g+c_h$, let us show the details for two terms that require somewhat more sensitive treatment. The first is the contribution to $c_{g,1}$, from when $\dd_{z_k}$ falls on the $b_kR_2(z_k)$ term in $g_k$:
\eqn{
c_{g,1,2}=w_k^r(t)\int_t^{t_1}w_k^z(t')^{-1}\exp(\int_t^{t'}b_kR_1^2(z_k)dt'')b_kR_2'(z_k)dt',
}
which we name based on its position within $c_{g,1}$. Recall that $w_k^r$ is defined piecewise, and we must separately consider the two cases. For $k$ sufficiently small, $w_k^r=\lambda^{\frac12(4-\alpha)}$ and we estimate
\eqn{
c_{g,1,2}&\lesssim_\lambda N_k^2\int_t^{t_1}\exp(-N_k^2t'+O((t'-t)\lambda^{-4+\alpha}N_k^2))\lesssim N_k^2T\exp(-N_k^2t)
}
using \eqref{Rrationalfunctionbounds} for $R_2'$. The smallness follows as above. On the other hand, suppose $k$ is large so that $w_k^r=\exp(2N_k^2(t-\tau_k))$. Then
\eqn{
c_{g,1,2}&\lesssim \|b\|_{L^\infty}\int_{t}^{t_1}\exp(-N_k^2(t'-t)(2-O(\lambda^{-4+\alpha})))dt'\\
&\lesssim N_k^{-2}\|b\|_\infty(1-\exp(-N_k^2t_1))\\
&\lesssim\lambda^{-4+\alpha}.
}

Finally, let us consider the contribution to $c_{g,2}$ from when $\dd_{b_k}$ falls on the $b_kR_2(z_k)$ term:
\eqn{
c_{g,2,2}&=w_k^r(t)\int_t^{t_1}\frac{|\dd_{r_{k-2}}b_k(t')|}{w_{k-2}^r(t')}|R_2(z_k)|\exp(\int_t^{t'}b_kR_1^2(z_k)dt'')dt'.
}
Suppose first that $w_k^r(t)=\lambda^{\frac12(4-\alpha)}$. Then $\exp(N_k^2t)\leq\lambda^{\frac34\alpha-1}$ and we can estimate
\eqn{
c_{g,2,2}&\lesssim_\lambda N_k^2\int_t^{t_1}\exp((t'-t)\|b_k\|_{L^\infty})dt'\leq TN_k^2\exp(N_k^2t)\lesssim_\lambda T/t_1.
}
If instead $\exp(N_k^2t)\geq\lambda^{\frac34\alpha-1}$, then
\eqn{
c_{g,2,2}&\lesssim \lambda^{-\frac12(4-\alpha)}N_{k-2}^\alpha\delta_{k-2}\exp(2N_k^2t)\int_t^{t_1}|R_3(z_{k-2})\|\exp(-2N_k^2t'+(t'-t)\|b_k\|_{L^\infty})dt'\\
&\lesssim \lambda^{-\frac32(4-\alpha)}N_k^2\int_t^{t_1}\exp(-N_k^2(t'-t)(2-O(\lambda^{-4+\alpha})))dt'\\
&\lesssim\lambda^{-\frac32(4-\alpha)}(1-\exp(-N_k^2T))
}
using the same collection of tools.

Having proved that $(F,G)$ is a contraction on $B_{\sigma,[t_2,t_1]}$, the existence and bounds entailed by $\mathcal H(t_2)$ are immediate.
\end{proof}

There is one final technical ingredient needed to close the bootstrap.

\begin{prop}\label{closedhypothesisprop}
    The set of $t$ for which $\mathcal H'(t)$ holds is closed in $[0,t_0]$.
\end{prop}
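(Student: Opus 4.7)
Let $(t_n) \subset [0,t_0]$ be a sequence with $\mathcal H'(t_n)$ holding and $t_n \to t_*$; we must verify $\mathcal H'(t_*)$. The case when $t_n \geq t_*$ eventually is immediate: each $\mathcal H'(t_n)$ provides a solution on $[t_n,\infty) \supset [t_*,\infty)$, all of which coincide there with the solution from Proposition~\ref{stablemanifoldprop}, and the containment $[t_*,t_0] \subset [t_n,t_0]$ forces the supremum defining $B_{\sigma,[t_*,t_0]}(\eps)$ to be bounded by that over $[t_n,t_0]$. So it suffices to treat the case $t_n \downarrow t_*$.

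For such a sequence, Proposition~\ref{localprop} (backward uniqueness of the continuation under the bound $B_{\sigma,\cdot}(10)$) implies that all of the solutions supplied by $\mathcal H'(t_n)$ agree on the overlap of their domains; call the common object $u$, defined on $(t_*,\infty)$. For each fixed $k$, the bound $(r,z) \in B_{\sigma,[t_n,t_0]}(\eps)$ together with formula \eqref{ufromrandz} gives a $k$-dependent but $t$-uniform bound on $u_{k-1},u_k$ over $(t_*,t_0]$; combined with \eqref{u0duhamelIII} this also controls $u_0$ when $\sigma=2$. Consequently the right-hand side of \eqref{system} is bounded for each coordinate, and each $u_k$ is Lipschitz on $(t_*,t_0]$, hence extends continuously to $t_*$. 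Passing to the limit in the Duhamel representation for each coordinate confirms that the extended $u$ solves \eqref{system} on $[t_*,\infty)$ and still agrees with the solution from Proposition~\ref{stablemanifoldprop} on $[t_0,\infty)$.

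It remains to propagate the bound. For each $k \in 2\mathbb N+\sigma$ and $t \in (t_*,t_0]$, we have
\eqn{
w_k^r(t)|r_k(t)-1| + w_k^z(t)\bigl|z_k(t)-\tfrac12 e^{-N_k^2(t-\tau_k)}\bigr| \leq \eps.
}
The bounds in $B_{\sigma,[t_n,t_0]}(\eps)$ imply in particular that $z_k$ remains strictly positive, so $u_k = \delta_k r_k R_3(z_k) > 0$ throughout $(t_*,t_0]$, with a lower bound stable under $t \to t_*$. The change of coordinates \eqref{rzdefinition} is therefore smooth in a neighborhood of each $u(t)$, $t \in [t_*,t_0]$, so $(r,z)$ extends continuously to $t_*$ as functions of $u$, and the displayed inequality persists at $t_*$ by continuity. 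Taking the supremum over $k$ and $t \in [t_*,t_0]$ gives $(r,z) \in B_{\sigma,[t_*,t_0]}(\eps)$, which together with the preceding paragraph establishes $\mathcal H'(t_*)$.

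The only mildly delicate point is the second paragraph: one must guarantee that, although $\mathcal H'$ is stated in terms of $(r,z)$, the underlying $u$ has a genuine continuous extension to $t_*$ despite the infinite-dimensional nature of the system. This is handled coordinatewise, using that the $\mathcal H'(t_n)$ bounds give per-$k$ control of $u$ on $(t_*,t_0]$ and hence Lipschitz continuity of each $u_k$ there — no compactness in infinitely many modes is required, since the conclusion $\mathcal H'(t_*)$ only asks for shell-by-shell (and $k$-uniformly weighted) bounds that are already closed conditions under pointwise-in-$t$, shellwise limits.
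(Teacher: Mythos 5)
Your proof is correct and takes essentially the same approach as the paper: reduce to a decreasing sequence $t_n\downarrow t_*$, invoke backward uniqueness from Proposition~\ref{localprop} to glue the continuations into a single solution on $(t_*,\infty)$, extend continuously to $t_*$ shell-by-shell, and observe that the closed (non-strict) inequalities defining $B_{\sigma,\cdot}(\eps)$ persist in the limit. The one small technical difference is cosmetic: the paper extends $r_k,z_k$ directly by bounding $\dot r_k,\dot z_k$ and then $\ddot r_k,\ddot z_k$ to get $C^1$ extensions, while you extend $u_k$ via Lipschitz bounds from the Duhamel formula and then pass to $(r,z)$ using positivity of $u_k$ and smoothness of the change of coordinates; both devices accomplish the same thing.
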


\begin{proof}
    Assume $\mathcal H'(t_i)$ holds for a sequence $t_i\to t_\infty$ in $[0,t_0]$; we claim $\mathcal H'(t_\infty)$ as well. Note that $\mathcal H'(s)$ implies $\mathcal H'(t)$ if $s\leq t$. Thus we may reduce to the case where $t_i$ is decreasing. Moreover, by backward uniqueness (Proposition~\ref{localprop}), if $t_i<t_j$, the solutions provided by $\mathcal H'(t_i)$ and $\mathcal H'(t_j)$ agree on $[t_j,\infty)$. Thus we arrive at a solution on the interval $(t_\infty,\infty)$ obeying $(r,z)\in B_{\sigma,(t_\infty,t_0]}(\eps)$. Inserting these bounds into \eqref{rzsystem} implies that $\dot r_k,\,\dot z_k$ are uniformly (in $t$) bounded near $t_\infty$ for each $k$. Those estimates can be bootstrapped\footnote{In the other sense of the term.} again using the derivative of \eqref{rzsystem} to obtain uniform (in $t$) bounds on $\ddot r_k,\,\ddot z_k$. Thus there exist extensions of $r_k$ and $z_k$ in the space $C^1([t_\infty,\infty))$, which implies $\mathcal H'(t_\infty)$.
\end{proof}

Putting these pieces together, we arrive at the following proposition. It will be convenient going forward to parameterize the space of solutions by $\mu=(\mu_1,\mu_2,\ldots)$ chosen from
$$M\coloneqq[1/10,10]^{\mathbb N+1}\subset X^s\quad(s\leq0).$$

\begin{prop}\label{bootstrapconclusionprop}
    For any $\mu\in M$, there exist $u,v$ such that the following hold:
    \begin{enumerate}[(i)]
        \item $u,v$ are non-negative solutions of \eqref{system} on $[0,\infty)$.
        \item When $u$ is written in $r,z$ coordinates, the data obey
    \eq{
    r_k(0)\in[\frac9{10},\frac{11}{10}],\quad z_k(0)\in\lambda^{\alpha-2}\mu_k^{-1}[\frac25,\frac35]\label{datarange}
    }
    for all $k\in2\mathbb N+1$.
    \item When $v$ is written in $r,z$ coordinates, the data obey \eqref{datarange} for all $k\in2\mathbb N+2$.
    \item Both solutions possess the instantaneous smoothing property
    \eq{\label{exponentialdecay}
    u_k(t)+v_k(t)\lesssim_\lambda \delta_ke^{-N_k^2t}
    }
    for all $k\in\mathbb N$, $t\geq0$. More precisely, $u$ and $v$ obey \eqref{manifoldboundsk-1}--\eqref{manifoldboundsk} when $k\in2\mathbb N+1$ and $k\in2\mathbb N+2$, respectively.
    \item\label{samedataforzeromode} The initial data agree at the zeroth mode,
    \eqn{
    u_0(0)=v_0(0).
    }
    \end{enumerate}
\end{prop}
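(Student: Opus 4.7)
The plan is to apply Proposition~\ref{stablemanifoldprop} twice---once with $\sigma=1$ (to produce $u$) and once with $\sigma=2$ (to produce a one-parameter family $v=v^{(a)}$)---extend both solutions to $[0,\infty)$ using the bootstrap machinery of Section~\ref{bootstrapsection}, and finally tune the free parameter $a$ to match the zeroth mode.

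Fix $\mu\in M$. For each $k\geq 1$, choose the time delay $\tau_k$ so that the reference value $\tfrac12 e^{N_k^2\tau_k}$ sits at the midpoint of the target interval $\lambda^{\alpha-2}\mu_k^{-1}[\tfrac25,\tfrac35]$ from \eqref{datarange}; concretely, set $N_k^2\tau_k=(\alpha-2)\log\lambda-\log\mu_k$, for which \eqref{tauconstraint} is automatic since $|\log\mu_k|\leq\log 10<5$. Proposition~\ref{stablemanifoldprop} with $\sigma=1$ and the odd-indexed $\tau_k$'s yields $u$ on $[t_0,\infty)$; with $\sigma=2$, the even-indexed $\tau_k$'s, and $a\in[3\delta_1/2,5\delta_1/2]$, it yields $v^{(a)}$ on $[t_0,\infty)$. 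Each of these is then extended to $[0,t_0]$ by bootstrap. Define $S=\{t\in[0,t_0]:\mathcal{H}'(t)\}$. Lemma~\ref{utorzlemma} and Proposition~\ref{bootsrapboundsprop} place $t_0\in S$; Propositions~\ref{localprop} and \ref{bootsrapboundsprop} combine to show that every $t\in S\cap(0,t_0]$ admits a smaller $t'\in S$; and Proposition~\ref{closedhypothesisprop} gives closedness. If $\inf S>0$ then it would lie in $S$ by closedness, contradicting the downward-propagation property; hence $S=[0,t_0]$ and $\mathcal{H}'(0)$ holds.

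From the bound $(r,z)\in B_{\sigma,[0,t_0]}(\eps)$ at $t=0$, claims (ii) and (iii) read off directly. Claim (iv) follows by combining \eqref{manifoldboundsk-1}--\eqref{manifoldboundsk} on $[t_0,\infty)$ with the bootstrap bounds and the formulas \eqref{ufromrandz} on $[0,t_0]$. Non-negativity (i) follows from $u_k=\delta_k r_k R_3(z_k)\geq 0$ together with the Duhamel formulas for the odd-indexed modes (and for $v_0$ in the $\sigma=2$ case), which express these quantities as integrals of non-negative integrands.

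It remains to arrange (v): for each $\mu$, select $a=a(\mu)$ with $v^{(a)}_0(0)=u_0(0)$. The parameter $a$ enters only affinely, as a boundary term in the Duhamel formulas \eqref{u0duhamel} and \eqref{u0duhamelII}, so all the contraction mappings in the constructions of Propositions~\ref{stablemanifoldprop} and \ref{localprop} depend Lipschitz continuously on $a$; standard continuous dependence of fixed points then yields that $a\mapsto v^{(a)}_0(0)$ is continuous on $[3\delta_1/2,5\delta_1/2]$. An intermediate value argument---evaluating $v^{(a)}_0(0)$ at the endpoints via the backward Duhamel representation together with the bootstrap bounds on $v_1$---shows that the image of this map brackets $u_0(0)=\delta_1(1+R_1(z_1(0))r_1(0))\approx 2\delta_1$, producing the desired $a$. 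The main obstacle is precisely this matching: quantifying the cancellation in the backward Duhamel formula for $v_0$ between the boundary contribution $e^{N_0^2(t_0-t)}a$ and the similarly sized integral of $v_1^2$ against the same growing exponential, sharply enough to guarantee that $u_0(0)$ lies within the image of $a\mapsto v^{(a)}_0(0)$, is where the genuine analytic content concentrates; the continuity and bootstrap components reduce to careful bookkeeping on contraction mappings already set up in Sections~\ref{stablemanifoldsection}--\ref{bootstrapsection}.
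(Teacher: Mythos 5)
Your overall architecture matches the paper exactly for items (i)--(iv): apply Proposition~\ref{stablemanifoldprop} with the choice $\tau_k=N_k^{-2}\log(\lambda^{\alpha-2}/\mu_k)$ (which the paper also uses), then close the bootstrap by combining Lemma~\ref{utorzlemma}, Propositions~\ref{bootsrapboundsprop}, \ref{localprop}, and \ref{closedhypothesisprop} via the standard open/closed/nonempty argument, and read \eqref{datarange} and \eqref{exponentialdecay} off $\mathcal H'(0)$. That part is fine.

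Where you diverge from the paper is item (v). The paper does not tune $a$ by a continuity/IVT argument at all. Instead it constructs $u$ \emph{first} (so $u_0(0)$ is a known number), then verifies that $a\coloneqq u_0(0)$ lies in the admissible window \eqref{arange} by a short computation using \eqref{ufromrandz} and \eqref{datarange}, and feeds this particular $a$ into Proposition~\ref{stablemanifoldprop} with $\sigma=2$. The conclusion $u_0(0)=v_0(0)$ is then claimed to follow directly from this choice, with no further matching argument. So the ordering is essential---it is the reason Figure~\ref{Tdefinitionfigure} routes $u^0_0$ into the box that produces $v$---and the paper gets (v) ``for free'' rather than by a one-parameter shooting argument. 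Your proposal, by contrast, introduces a family $v^{(a)}$, appeals to continuity of $a\mapsto v_0^{(a)}(0)$, and invokes the intermediate value theorem; that is a genuinely different route, and it is also the step you explicitly leave incomplete.

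Moreover, I do not think the IVT bracketing you sketch can work as stated. The parameter $a$ in Proposition~\ref{stablemanifoldprop} fixes the value of $v_0$ at time $t_0$, not at time $0$, and the zeroth mode must be propagated backward to $t=0$ through $\dot v_0=-N_0^2 v_0 - N_0^\alpha v_1^2$. Integrating this backward one finds $v_0(0)=e^{N_0^2 t_0}a+N_0^\alpha\int_0^{t_0}e^{N_0^2 s}v_1^2(s)\,ds$, with $e^{N_0^2 t_0}=\lambda^{2\alpha}$ and with the integral dominated by the $v_1$-dynamics near $s\sim N_2^{-2}$, giving a contribution of size $\sim\lambda^{2-2\alpha}(\log\lambda)\,\delta_1\ll\lambda^{2\alpha}\delta_1$. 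The two contributions are therefore not ``similarly sized,'' and there is no cancellation to be quantified: $v_0^{(a)}(0)$ sits near $\lambda^{2\alpha}a$ for every $a$ in $[\tfrac32\delta_1,\tfrac52\delta_1]$, so the image of the map you want to apply the intermediate value theorem to does not bracket $u_0(0)\approx 2\delta_1$. To make a shooting argument close, you would at minimum have to let $a$ range over a scale $\sim\lambda^{-2\alpha}\delta_1$; this is not compatible with \eqref{arange} as written, and is not what your proposal does. I recommend you instead follow the paper's ordering (construct $u$, then feed $u_0(0)$ into the $\sigma=2$ construction), and separately work through, very carefully, what exactly the free parameter $a$ pins down at $t=0$ once the backward bootstrap is run---in particular, reconcile the backward Duhamel for $v_0$ (cf.\ \eqref{u0duhamelII}--\eqref{u0duhamelIII}) with the sign of the nonlinearity in \eqref{system} and with \eqref{arange}, since this is where your bracketing heuristic goes wrong.
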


\begin{proof}
   The strategy is as depicted in the first part of Figure~\ref{Tdefinitionfigure}. First we apply Proposition~\ref{stablemanifoldprop} with $\tau_k=N_k^{-2}\log(\lambda^{\alpha-2}/\mu_k)$ $(k\in2\mathbb N+1$) and $\sigma=1$ to obtain the solution $u$ on $[t_0,\infty)$. It is easy to verify that if $\mu\in M$, then this choice of $\tau_k$ satisfies \eqref{tauconstraint}. Next we propagate the existence and bounds for $u$ backward in time to $t=0$. It suffices to prove $\mathcal H'(0)$ with $\sigma=1$ which we do using the general bootstrap recipe from Proposition~1.21 in~\cite{taobook}: Propositions~\ref{bootsrapboundsprop} and \ref{localprop} together imply that $\{t\in[0,t_0]:\mathcal H'(t)\}$ is open in $[0,t_0]$. Proposition~\ref{closedhypothesisprop} implies that it is closed. Finally, Lemma~\ref{utorzlemma} implies that it contains $t_0$. We conclude $\mathcal H'(0)$ which guarantees the existence of $u$ on $[0,\infty)$ as well as the estimates \eqref{datarange} and \eqref{exponentialdecay}.
   
   Now that we have constructed $u$, we can begin constructing $v$ by repeating a parallel procedure with $\sigma=2$. First we apply Proposition~\ref{stablemanifoldprop} with $\tau_k=N_k^{-2}\log(\lambda^{\alpha-2}/\mu_k)$ ($k\in2\mathbb N+2$) with the additional input $a=u_0(0)$. Note that this $a$ satisfies the requirement \eqref{arange} by a simple calculation involving \eqref{ufromrandz} and \eqref{datarange} for $u_0$. A bootstrap argument identical to the one for $u$ applies to produce $v$ on $[0,\infty)$ obeying \eqref{datarange} and \eqref{exponentialdecay}. Because of the choice $a=u_0(0)$ in the application of Proposition~\ref{stablemanifoldprop}, we have (\ref{samedataforzeromode}).
\end{proof}

We remark that the order of operations in the above proof is quite important because the initial value of $u_0$ must be known to construct $v$; see Figure~\ref{Tdefinitionfigure}.

\section{Initial data and lack of uniqueness}\label{constructthedataprovethetheoremsection}

Proposition~\ref{bootstrapconclusionprop} by itself does not prove non-uniqueness because the initial data of $u$ and $v$ agree only at the zeroth mode. The final step of the argument is to show topologically that there exists a choice of $\mu$ to make the data fully coincide.

First we make a brief comment on notation. Recall from Section~\ref{bootstrapsection} that $u$ is naturally expressed in the coordinates $(r_k,z_k)_{k\in2\mathbb N+1}$ and $v$ in $(r_k,z_k)_{k\in2\mathbb N+2}$. Thus the pair $u,v$ is determined precisely by $(r_k,z_k)_{k\geq1}$. We write $r^0$ and $z^0$ for the initial data $(r_k(0))_{k\geq1}$ and $(z_k(0))_{k\geq1}$, while $u^0$ and $v^0$ refer to the data $(u_k(0))_{k\geq0}$ and $(v_k(0))_{k\geq0}$.

\begin{proof}[Proof of Theorem~\ref{maintheorem}]

The initial data is constructed as follows (see Figure~\ref{Tdefinitionfigure}). Proposition~\ref{bootstrapconclusionprop} defines a mapping $\mu\mapsto (u,\,v)$ to solutions of \eqref{system} for $\mu\in M$. Consider in particular the mapping to the initial data,
\eqn{
\mu\mapsto(u^0,v^0),\quad \mu\in M.
}
We claim there exists a $\mu\in M$ such that $u^0(\mu)=v^0(\mu)$.

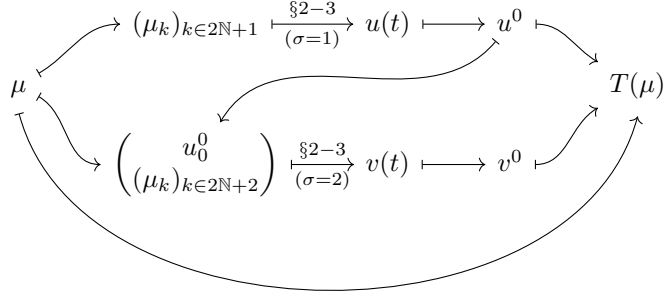
\begin{figure}
    \centering
    \begin{tikzcd}[row sep=.2cm]
    & (\mu_k)_{k \in 2\mathbb N + 1} \arrow[r, maps to, "\S\ref{stablemanifoldsection}-\ref{bootstrapsection}", "(\sigma=1)"']  & u(t) \arrow[r, maps to]  & u^0 \arrow[dr, maps to, out=0, in=150] \arrow[ddll,maps to, out=230, in=60] & \\
    \mu \arrow[rrrr, mapsto, start anchor={[xshift=-1.7ex, yshift=-2.5ex]east}, end anchor={[xshift=0ex, yshift=-.7ex]south}, out=-75, in=255] \arrow[ur, maps to, out=30, in=180] \arrow[dr, maps to, out=-30, in=180] & & & & T(\mu) \\
    & \left(\begin{matrix}{u^0_0}\\(\mu_k)_{k \in 2\mathbb N + 2}\end{matrix}\right) \arrow[r, maps to, "\S\ref{stablemanifoldsection}-\ref{bootstrapsection}", "(\sigma=2)"'] & v(t) \arrow[r, maps to] & v^0 \arrow[ur, maps to, out=0, in=210] & 
\end{tikzcd}
    \caption{A schematic view of the definition of $T(\mu)$. The values of $\mu_k$ for $k$ odd determine $u^0$, while the values of $\mu_k$ for $k$ even along with $u^0_0$ determine $v^0$. Fixed points of $T$ correspond to the  desired outcome $u^0=v^0$.}\label{Tdefinitionfigure}
\end{figure}

Consider the map $T:M\to \mathbb R^{\mathbb N+1}$ given by
\begin{equation*}
T(\mu)_k=\mu_k\begin{cases}
    v_k^0(\mu)/u_k^0(\mu),&k\text{ odd}\\
    u_k^0(\mu)/v_k^0(\mu),&k\text{ even}
\end{cases},\quad k\geq1.
\end{equation*}
We claim that $T:M\to M$. Indeed, suppose $k\in2\mathbb N+1$. Then \eqref{ufromrandz} and \eqref{datarange} imply $v_k^0(\mu)\in[\delta_{k+1},3\delta_{k+1}]$ and $u_k^0(\mu)/\mu_k\in[\delta_{k+1},6\delta_{k+1}]$. Dividing the two, we see that indeed $T(\mu)_k\in[1/10,10]$. If instead $k\in2\mathbb N+2$, the same argument applies with $u$ and $v$ exchanged.

Endowing $M$ with the $\|\cdot\|_s$ norm for some $s\in(-2,0)$, it becomes a compact subset of $X^s$. We claim that $T:M\to M$ is continuous. For any $k\geq1$, by \eqref{ufromrandz} and the fact that $\delta_{k+1}/\delta_k=\lambda^{-\alpha+2}$, one computes that
\eqn{
T(\mu)_k=\lambda^{-\alpha+2}\mu_kf_k(r^0(\mu),z^0(\mu))
}
where $f:(\mathbb R^\mathbb N)^2\to\mathbb R^\mathbb N$ is defined as
\eq{\label{fdefinition}
        f_k(\rho,\zeta)\coloneqq\frac{1+\rho_{k+1}R_1(\zeta_{k+1})}{\rho_kR_3(\zeta_k)},
        }
recalling \eqref{rationalfunctions}. Let us slightly abuse notation by writing $f(\mu)$ for $f(r^0(\mu),z^0(\mu))$. Based on the inequality
$$\|\mu f(\mu)-\mu'f(\mu')\|_s\leq\|\mu-\mu'\|_s\|f(\mu)\|_0+\|\mu'\|_0\|f(\mu)-f(\mu')\|_{s}$$
it suffices to show that $\mu\mapsto f(r^0(\mu),z^0(\mu))$ is bounded $M\to X^0$ and continuous $M\to X^s$. The boundedness is immediate from \eqref{datarange} and Lemma~\ref{topologylemma}(i). To show continuity, note that Lemma~\ref{topologylemma}(ii)-(iii) together imply that $\mu\to(r^0,z^0)$ is continuous as a mapping $M\to A\subset(X^s)^2$. We then conclude by part (i) of the lemma. Since $T$ is a continuous map on the compact, convex subset $M\subset X^s$, Schauder's theorem provides a fixed point $\mu_*\in M$. One immediately sees from the definition of $T$ and Proposition~\ref{bootstrapconclusionprop}(\ref{samedataforzeromode}) that $\mu_*$ gives rise to solutions $u,v$ of \eqref{system} with the same data $u^0$.

To see that $u$ and $v$ are distinct, consider the asymptotics proved in Proposition~\ref{stablemanifoldprop}. For all $t\geq t_0$, we have that $v_1\lesssim\delta_2\exp(-2N_2^2(t-\tau_2))$, while $u_1$ is comparable to $\delta_1\exp(-N_1^2(t-\tau_1))$. These are clearly incompatible when $t$ is sufficiently large. To see that $u$ and $v$ are Leray--Hopf, i.e., satisfy \eqref{energyinequality}, consider the energy balance for the truncated system,
\eqn{
\frac d{dt}\sum_{k=0}^m\frac{u_k^2}2+\sum_{k=0}^mN_k^2u_k^2=-N_m^\alpha u_mu_{m+1}^2
}
which implies
\eqn{
\sum_{k=0}^m\frac{u_k^2(t)}2+\int_0^t\sum_{k=0}^mN_k^2u_k^2(s)ds\leq\sum_{k=0}^m\frac{(u_k^0)^2}2
}
for all $m$ (recalling $u_k\geq0$). Then \eqref{energyinequality} follows by Fatou's lemma.
\end{proof}

\appendix




\section{Stability in the delays}\label{stabilityappendixsection}

\begin{lem}\label{gronwalllemma}
We have the following Gr\"onwall-type inequalities:
\begin{enumerate}[(i)]
    \item Assume that $g(t)\in (C\cap L^1)([t_0,\infty);[0,\infty))$ obeys
    \eq{
    g(t)\leq C\left(\delta e^{-t}+t^{-1}\int_{t_0}^tg(t')dt'+
    \frac{t-t_0}{t}\int_t^\infty(t')^{-1}g(t')dt'\right)\label{gronwallhypothesis}
    }
    for all $t\in[t_0,\infty)$. Then
    \eqn{
    g(t)\lesssim_{C,t_0,t} \delta.
    }
    \item Assume that $g(t)\in C((0,t_0];[0,1])$ obeys
    \eqn{
    g(t)\leq C(g(t_0)^{t/t_0}+\frac1t\int_t^{t_0}g(t')dt').
    }
    Then
    \eqn{
    g(t)\lesssim Cg(t_0)^{t/t_0}(1+(t_0/t)^{C+1}).
    }
\end{enumerate}
    
\end{lem}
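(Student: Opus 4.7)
For (ii), the plan is a backward Gr\"onwall with integrating factor. Setting $F(t) = \int_t^{t_0} g(s)\,ds$, so that $F(t_0)=0$ and $F'(t) = -g(t)$, the hypothesis rewrites as
\[
F'(t) + \frac{C}{t}F(t) \;\geq\; -Cg(t_0)^{t/t_0}.
\]
Multiplying by the integrating factor $t^C$ gives $\bigl(t^C F(t)\bigr)' \geq -Ct^C g(t_0)^{t/t_0}$, and backward integration from $t$ to $t_0$ produces $F(t) \leq Ct^{-C}\int_t^{t_0} s^C g(t_0)^{s/t_0}\,ds$. The key observation is that since $g(t_0) \in [0,1]$, the map $s \mapsto g(t_0)^{s/t_0}$ is non-increasing, so the integrand is bounded by $s^C g(t_0)^{t/t_0}$, yielding $F(t) \leq \tfrac{C t_0^{C+1}}{(C+1)\,t^C}\,g(t_0)^{t/t_0}$. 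Substituting back into the original hypothesis then gives $g(t) \lesssim C g(t_0)^{t/t_0}\bigl(1 + (t_0/t)^{C+1}\bigr)$ as claimed.

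For (i), I would apply analogous machinery to the pair $F(t) = \int_{t_0}^t g(s)\,ds$ and $H(t) = \int_t^\infty s^{-1}g(s)\,ds$, both finite by the $L^1$ hypothesis. The main reduction is an integration by parts in $F(t) = -\int_{t_0}^t s\, H'(s)\,ds$ to obtain the identity
\[
F(t) \;=\; t_0 H(t_0) - t H(t) + \int_{t_0}^t H(s)\,ds,
\]
which, when substituted into the hypothesis, eliminates $F$ in favor of $H$ alone:
\[
g(t) \leq C\delta e^{-t} + C t_0 t^{-1}\bigl(H(t_0) - H(t)\bigr) + C t^{-1}\int_{t_0}^t H(s)\,ds.
\]
Since $H'(t) = -t^{-1}g(t)$, this converts to a first-order differential inequality for $H$ that contains a favorably signed term $-C t_0 t^{-2} H$. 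I would then use the integrating factor $\mu(t) = e^{-Ct_0/t}$ (so that $\mu'/\mu = C t_0/t^2$) to absorb this term, and close the residual coupling to $\int_{t_0}^t H$ with a Gr\"onwall iteration in that auxiliary variable. The $C\delta e^{-t}$ source term drives the whole argument and produces the $\delta$-dependence in the final bound.

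The main obstacle will be the boundary behavior at infinity: the integrating-factor bound does not yield a uniform-in-$t$ estimate $g \lesssim_C \delta$, because the natural bound for $H(t_0)$ in terms of $\delta$ does not close globally. This is compatible with the statement, which allows the constant to depend on $t$. I would handle it by fixing $t$ at the outset and carrying out the Gr\"onwall loop on the compact interval $[t_0,t]$, where continuity of $g$ and finiteness of $H(t_0)$ (guaranteed by $g \in L^1$) provide the needed boundedness before the bootstrap sharpens everything down to a $\delta$-multiple with a $t$-dependent constant.
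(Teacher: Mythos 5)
Your part (ii) is correct and essentially the paper's argument in disguise: the paper sets $f(t)=tg(t)$ and applies backward Gr\"onwall with weight $(t'/t)^C$, whereas you work with $F(t)=\int_t^{t_0}g$ and the integrating factor $t^C$; the two are equivalent, and the monotonicity observation $g(t_0)^{s/t_0}\leq g(t_0)^{t/t_0}$ for $s\geq t$ (since $g(t_0)\in[0,1]$) is the same key point in both. This half is fine.

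Part (i) has a genuine gap. You start down a different road than the paper (the integration-by-parts identity $F(t)=t_0H(t_0)-tH(t)+\int_{t_0}^tH$, eliminating $F$ in favor of $H$), which is a reasonable first move, but the closing argument is only gestured at and the gestures do not add up. The integrating factor $\mu(t)=e^{-Ct_0/t}$ absorbs only the $-Ct_0t^{-2}H$ piece; the $Ct^{-1}\int_{t_0}^tH$ coupling and, more importantly, the boundary term $Ct_0t^{-1}H(t_0)$ remain. You explicitly acknowledge that $H(t_0)$ cannot be bounded by $\delta$ directly, and then assert that ``carrying out the Gr\"onwall loop on the compact interval $[t_0,t]$'' with merely the \emph{finiteness} of $H(t_0)$ will let a ``bootstrap sharpen everything down to a $\delta$-multiple.'' This is the crux, and it is precisely the step that is unsupported: a quantity bounded by $H(t_0)$ does not become bounded by $\delta$ by restricting to a compact interval, and no iteration scheme is specified that would improve it. The paper's proof handles this differently: rather than eliminating $F$, it forms the single weighted quantity $G(t)=\bigl(\tfrac1{t_0}\int_{t_0}^tg+\int_t^\infty\tfrac{g(s)}{s}\,ds\bigr)\,t^Ce^{-Ct/t_0}$. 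The weight $t^Ce^{-Ct/t_0}$ is engineered so that upon differentiating $G$ and inserting the hypothesis, the contributions of both $\int_{t_0}^tg$ and $\int_t^\infty g/s$ acquire favorable signs and drop out, leaving only the $\delta e^{-t}$ source; the conclusion is then extracted using $G(\infty)=0$ (which uses $g\in L^1$) and unwinding the weight. Your integrating factor $e^{-Ct_0/t}$ does not produce this cancellation, and without it (or an equally concrete replacement) the argument for (i) does not close.
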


\begin{proof}
    Consider the function
    \eqn{
    G(t)=\left(\frac1{t_0}\int_{t_0}^tg+\int_t^\infty\frac{g(t')}{t'}\right)t^Ce^{-Ct/t_0}.
    }
    Differentiating and applying \eqref{gronwallhypothesis},
    \eqn{
    G'(t)&=\left((t_0^{-1}-t^{-1})g-Ct_0^{-1}(t_0^{-1}-t^{-1})\int_{t_0}^tg-C(t_0^{-1}-t^{-1})\int_t^\infty\frac{g(t')}{t'}\right)t^Ce^{-Ct/t_0}\\
    &\leq C\left((t_0^{-1}-t^{-1})\delta e^{-t}-(t_0^{-1}-t^{-1})^2\int_{t_0}^tg-(1-t_0/t)\int_t^\infty\frac{g(t')}{t'}\right)t^Ce^{-Ct/t_0}\\
    &\leq C\delta(t_0^{-1}-t^{-1}) t^Ce^{-(1+C/t_0)t}.
    }
    Moreover from the assumption that $g$ is integrable, we have $G(t)\to0$ as $t\to\infty$. Thus by the fundamental theorem of calculus,
    \eqn{
    G(t)&\lesssim_{C,t_0}\delta
    }
    for $t\geq t_0$. Then by \eqref{gronwallhypothesis},
    \eqn{
    g(t)&\lesssim_C\delta e^{-t}+\max(\frac{t_0}t,1-\frac{t_0}t)t^{-C}e^{Ct/t_0}G(t)
    }
    for all $t\geq t_0$. Using the upper bound for $G$, we obtain the result.
    
    For (ii), we define $f(t)=tg(t)$ which obeys
    \eqn{
    f(t)\leq C(tg(t_0)^{t/t_0}+\int_t^{t_0}\frac{f(t')}{t'}dt').
    }
    Then by the standard Gr\"onwall inequality (backward in time), we have
    \eqn{
    f(t)&\leq Ctg(t_0)^{t/t_0}+C^2\int_t^{t_0}g(t_0)^{t'/t_0}\left(\frac{t'}t\right)^Cdt'\\
    &\lesssim Ctg(t_0)^{t/t_0}(1+\left(\frac{t_0}t\right)^{C+1})
    }
    which proves the claim.
\end{proof}

In the next lemma we continue with the notation of Section~\ref{constructthedataprovethetheoremsection}. In particular, $(r,z)$ should be taken to contain the $r_k,z_k$ coordinate expressions for both $u$ ($k$ odd) and $v$ ($k$ even).

\begin{lem}\label{topologylemma}
    Let $s\in(-2,0)$. Define $A\subset (X^s)^2$ to be the subset of $(r^0,z^0)$ satisfying \eqref{datarange}. We have the following technical facts:
    \begin{enumerate}[(i)]
        \item The function $f:A\to X^s$ defined in \eqref{fdefinition} is continuous. Moreover it is bounded as a map $A\to X^0$.
        \item For every $t>0$, the mapping $\mu\mapsto(r(t),z(t))$ defined by Proposition~\ref{bootstrapconclusionprop} is in $C(M;(X^s)^2)$.
        \item We have the convergence $(r(t),z(t))\to(r^0,z^0)$ in $(X^s)^2$ as $t\to0+$ uniformly in $\mu\in M$.
    \end{enumerate}
\end{lem}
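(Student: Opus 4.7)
For part (i), the plan is a direct computation. The constraint \eqref{datarange} forces $\rho_k\in[9/10,11/10]$ and $\zeta_k\geq(2/5)\lambda^{\alpha-2}/\mu_k\gtrsim_\lambda\lambda^{\alpha-2}$. In that range $R_3(\zeta_k)=2\zeta_k/(\zeta_k^2+1)\sim 2/\zeta_k$, so $1/R_3(\zeta_k)=O_\lambda(1)$; combined with $|R_1|\leq 1$ this gives $|f_k|=O_\lambda(1)$ uniformly in $k$, proving boundedness $A\to X^0$. For continuity $A\to X^s$, I would bound each of the four partial derivatives $\partial_{\rho_j}f_k,\partial_{\zeta_j}f_k$ $(j\in\{k,k+1\})$ by $O_\lambda(1)$ using \eqref{Rrationalfunctionbounds} together with the observation that $\partial_\zeta(1/R_3(\zeta))=-R_3'(\zeta)/R_3(\zeta)^2$ stays bounded as $\zeta\to\infty$. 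Since $f_k$ depends only on indices $k$ and $k+1$, shifting the $(k+1)$-index contributes only a factor $\lambda^{-s}$ to the weighted norm (as $s<0$), yielding $\|f(\rho,\zeta)-f(\rho',\zeta')\|_s\lesssim_\lambda\|\rho-\rho'\|_s+\|\zeta-\zeta'\|_s$.

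For part (iii), the strategy is to combine the uniform bootstrap bounds with a crude time-derivative estimate, then split the $X^s$ sup into low and high modes. The inclusion $(r,z)\in B_{\sigma,[0,t_0]}(\eps)$ provided by Proposition~\ref{bootstrapconclusionprop} gives $|r_k(t)-1|\lesssim\lambda^{-(4-\alpha)/2}$ and $|z_k(t)|\lesssim\lambda^{\alpha-2}$ for all $t\in[0,t_0]$ and $\mu\in M$. Plugging these into \eqref{rzsystem} also yields the pointwise bound $|\dot r_k|+|\dot z_k|\lesssim_\lambda N_k^2$, so by integration
\begin{equation*}
|r_k(t)-r_k(0)|+|z_k(t)-z_k(0)|\lesssim_\lambda\min(N_k^2 t,\lambda^{\alpha-2})
\end{equation*}
uniformly in $\mu$. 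Given $\eta>0$, I would pick $K=K(\lambda,\eta)$ with $\lambda^{sK}\lambda^{\alpha-2}<\eta$ to handle modes $k\geq K$ by the trivial bound, and then pick $t=t(\lambda,\eta)$ small enough that $\lambda^{(s+2)K}t<\eta$ to handle the finitely many modes $k<K$.

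Part (ii) is the main obstacle. The plan is to exploit that both stages of the construction are Banach fixed points in norms that depend Lipschitz-continuously on $\mu$. First, $\mu\mapsto\tau=(N_k^{-2}\log(\lambda^{\alpha-2}/\mu_k))_k$ is continuous. The fixed point from Proposition~\ref{stablemanifoldprop} then depends Lipschitz-continuously on $\tau$ (which enters the weights and the explicit data $\delta_k e^{-N_k^2(t-\tau_k)}$) and on $a$ (which for $\sigma=2$ equals $u_0(0)$, itself continuous in $\mu$ via the $\sigma=1$ construction). The backward propagation in Proposition~\ref{localprop} is again a contraction in $d_{\sigma,[t_2,t_1]}$, so it depends Lipschitz-continuously on the value prescribed at $t_1$; iterating over the finitely many subintervals needed to cover $[0,t_0]$ gives pointwise-in-$k$ continuity of $\mu\mapsto(r_k(t),z_k(t))$. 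Lemma~\ref{gronwalllemma} is the tool that makes these Lipschitz estimates quantitative through the Duhamel integrations---its part~(i) on $[t_0,\infty)$ and its part~(ii) on $(0,t_0]$. To upgrade pointwise-in-$k$ continuity to $X^s$-continuity, I would split the $k$-sup exactly as in part~(iii): the tail modes $k\geq K$ are uniformly small by the bootstrap bound, while the finitely many modes $k<K$ are controlled by the standard finite-dimensional continuous dependence on ODE data and parameters.

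The chief difficulty in part~(ii) is that the weights $w_k^r(t),w_k^z(t)$ in the $d_\sigma$-metric themselves depend on $\tau=\tau(\mu)$, so comparing two solutions with different $\mu$'s requires comparing fixed points in different Banach spaces, with a $k$-uniform rate. I expect the Gr\"onwall inequalities in Lemma~\ref{gronwalllemma} to be tailored precisely for this purpose, giving the $k$-uniform control on how initial perturbations at $t_0$ amplify as one integrates backwards through $(0,t_0]$.
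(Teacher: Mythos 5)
Parts (i) and (iii) are correct and close to the paper's own treatment. For (i), the paper likewise reduces to uniform boundedness of the partial derivatives of $f_k$, and your observations about $R_3$ and the factor $\lambda^{-s}$ are the right calculations. For (iii), your derivative bound $|\dot r_k|+|\dot z_k|\lesssim_\lambda N_k^2$ is weaker than the paper's $N_k^2 e^{-N_k^2 t}$, and your low/high-mode split replaces the paper's closed-form rate $(N_0^2 t)^{-s/2}$, but the argument goes through and is essentially the same idea.

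Part (ii) has a genuine gap. You correctly identify the central difficulty --- the metric $d_{\sigma,[t_2,t_1]}$ has $\tau(\mu)$-dependent weights, so a Lipschitz-dependence argument for the contraction map would require comparing fixed points taken in different Banach spaces --- but then you only conjecture that Lemma~\ref{gronwalllemma} resolves it, without an argument. In fact the paper does not prove continuous dependence via Lipschitz stability of the contraction at all. Instead it sets $w_k=(u_k(\mu)-u_k(\mu'))/\delta_k$, writes the difference of the two Duhamel formulas (forward from $t_0$ for the construction in Proposition~\ref{stablemanifoldprop}, backward from $t_0$ for the continuation), and estimates $\|w(\cdot)\|_s$ directly in the unweighted $X^s$ norm; the $\tau$-dependent weights never enter the estimate. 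The resulting integral inequalities for $\|w(t)\|_s$ are exactly of the forms hypothesized in Lemma~\ref{gronwalllemma}(i) (on $[t_0,\infty)$) and (ii) (on $(0,t_0]$), so the lemma is indeed the crux --- but applied to the $X^s$-difference of the two solutions, not to a comparison of contraction maps in weighted metrics. Two further points your sketch omits: (a) a dominated-convergence step is needed to pass from $w^\sigma$ to $w^{\sigma+1}$, since the Grönwall bound directly controls only one parity class; and (b) after proving continuity of $\mu\mapsto u(t)/\delta$ in $X^s$, one still has to transfer to the $(r,z)$ coordinates, which requires bounding $z_k/r_k$ and $\delta_k z_k/u_k$ uniformly using \eqref{ufromrandz} and the bootstrap bounds. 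Your proposed iteration of Lipschitz estimates over finitely many subintervals would also need care with the accumulating Lipschitz constants as $t\downarrow 0$, and your tail argument gives smallness of each individual solution, not an argument that the modewise limits assemble into $X^s$-continuity without the quantitative Grönwall control.
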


Clearly (ii) and (iii) together imply that $\mu\mapsto(r^0,z^0)$ is in $C(M;A)$, using \eqref{datarange} to narrow the range to $A$.

\begin{proof}
    It is easy to see that the partial derivatives of $f_k$ are bounded uniformly in $k$ as a result of \eqref{datarange}. This implies the continuity claim in $(i)$. The boundedness claim is immediate from \eqref{datarange}.

    For (ii), we must first show that $u(t_0)$ and $v(t_0)$ depend continuously on $\mu$. As in Sections~\ref{stablemanifoldsection}--\ref{bootstrapsection}, we let the variable ``$u$'' stand in for $u$ or $v$ and distinguish them by the parity $\sigma$. Fix $\sigma\in\{1,2\}$ and $\mu,\mu'\in M$. Let $w^\sigma$ and $w^{\sigma+1}$ be the rescaled differences $((u_k(\mu)-u_k(\mu'))/\delta_k)_{k\in2\mathbb N+\sigma}$ and  $((u_k(\mu)-u_k(\mu'))/\delta_k)_{k\in2\mathbb N+\sigma+1}$, respectively. To estimate the differences we make use of the Duhamel formula from the construction in Proposition~\ref{stablemanifoldprop},
    \eqn{
    u_{k-1}&=N_{k-1}^\alpha\int_t^\infty\exp\Big(N_{k-1}^2(t'-t)-\int_{t}^{t'}N_{k-2}^\alpha u_{k-2}\Big)u_{k}^2(t')dt'\\
u_k&=\delta_k e^{-N_k^2(t-\tau_k)}+\int_{t_0}^t\exp(-N_k^2(t-t'))(N_{k-1}^\alpha u_{k-1}u_k-N_k^\alpha u_{k+1}^2)(t')dt'
    }
    for all $k\in2\mathbb N+\sigma$. Using the first formula one obtains an expression for $u_{k-1}(\mu)-u_{k-1}(\mu')$ from which we can estimate, for any $s<0$,
    \eqn{
    \|w^{\sigma+1}(t)\|_s&\lesssim_\lambda \sup_kN_k^\alpha\int_t^\infty\exp(N_{k-1}^2(t'-t))\Big(\int_t^{t'}N_k^\alpha \|w^\sigma(t'')\|_sdt''N_k^{-2\alpha+4}\exp(-2N_{k}^2t')\\
    &\quad+N_k^{-\alpha+2}\|w^\sigma(t')\|_s\exp(-N_k^2t')\Big)dt'\\
    &\lesssim_\lambda \sup_k\Big(N_k^4\int_t^\infty\int_t^{t'}\exp(N_{k-1}^2(t'-t)-2N_k^2t')\|w^\sigma(t'')\|_sdt''dt'\\
    &\quad+N_k^2\int_t^\infty\exp(N_{k-1}^2(t'-t)-N_k^2t')\|w^\sigma(t')\|_sdt'\Big)\\
    &\lesssim_\lambda\int_t^\infty\int_t^{t'}(t')^{-2}\|w^\sigma(t'')\|_sdt''dt'+\int_t^\infty(t')^{-1}\|w^\sigma(t')\|_sdt'.
    }
    Here we have used the estimate \eqref{manifoldboundsk} for the extra $u_{k-2}$ and $u_k$ factors that appear, along with the basic fact that $\sup_k N_k^pe^{-tN_k^2}\lesssim t^{-\frac p2}$. At this point we have begun neglecting the distinction between $N_k$, $N_{k-1}$, etc.\ since the implicit constants may depend on $\lambda$.
    
    Applying Tonelli's theorem to the first term yields
    \eq{
    \|w^{\sigma+1}(t)\|_s\lesssim_\lambda\int_t^\infty(t')^{-1}\|w^\sigma(t')\|_sdt'.\label{wsigmaplusonebound}
    }
    Similarly from the Duhamel formula, \eqref{manifoldboundsk-1}, and \eqref{manifoldboundsk},
    \eqn{
    \|w^\sigma(t)\|_s&\lesssim_\lambda \sup_k\Big(\|\mu-\mu'\|_se^{-N_k^2t}+N_k^2\int_{t_0}^t\exp(-N_k^2(t-t'))\exp(-N_k^2t')\|w(t')\|_sdt'\Big)\\
    &\lesssim_\lambda N_0^{-\alpha+2}\|\mu-\mu'\|_se^{-N_0^2t}+t^{-1}\int_{t_0}^t(\|w^\sigma(t')\|_s+\|w^{\sigma+1}(t')\|_s)dt'.
    }
    Combining this with \eqref{wsigmaplusonebound},
    \eqn{
    \|w^\sigma(t)\|_s&\lesssim_\lambda N_0^{-\alpha+2}\|\mu-\mu'\|_se^{-N_0^2t}+t^{-1}\int_{t_0}^t\|w^\sigma(t')\|_sdt'+t^{-1}\int_{t_0}^t\int_{t'}^\infty \frac{\|w^\sigma(t'')\|_s}{t''}dt''dt'.
    }
    We can transform the third term using Tonelli's theorem and split the $dt''$ integral into $[t_0,t]$ and $[t,\infty)$ parts to obtain
    \eq{\label{wsigmabound}
    \|w(t)\|_s&\lesssim_\lambda N_0^{-\alpha+2}\|\mu-\mu'\|_se^{-N_0^2t}+t^{-1}\int_{t_0}^t\|w^\sigma(t')\|_sdt'+\frac{t-t_0}{t}\int_t^\infty\frac{\|w^\sigma(t')\|_s}{t'}dt'.
    }
    Note that we also have
    \eq{\label{wapriori}
    \|w^\sigma(t)\|_s\lesssim_\lambda N_0^{-\alpha+2}\exp(-tN_0^2)
    }
    immediately from the triangle inequality, \eqref{manifoldboundsk} and \eqref{tauconstraint}.
    With \eqref{wsigmabound} and \eqref{wapriori} in hand, rescaling away the factors of $N_0$, $g(t)=\|w^\sigma(t)\|_s$ obeys the hypotheses of Lemma~\ref{gronwalllemma} (in particular with \eqref{wapriori} providing the integrability) and we obtain
    \eq{\label{pointwisew}
    \|w^\sigma(t)\|_s\lesssim_{\lambda,t} N_0^{-\alpha+2}\|\mu-\mu'\|_s
    }
    for all $t\geq t_0$. To transfer this to $w^{\sigma+1}$, consider \eqref{wsigmaplusonebound} evaluated at $t=t_0$. By \eqref{wapriori}, the integrand is dominated by an integrable function uniformly in $\mu$. Thus along any sequence $\mu_k\to\mu$, $\|w^\sigma(t)\|_s\to0$ pointwise in time which implies $\|w^{\sigma+1}(t_0)\|_s\to0$ by dominated convergence. This proves that $\mu\mapsto u(t_0)/\delta\in C(M; X^{s})$ (or that $\mu\mapsto u(t_0)\in C(M;X^{s+\alpha-2})$). 

    Next we propagate the continuity backward to all positive times. For any $\sigma\in\{1,2\}$, $0<t\leq t_0$, and $k\in2\mathbb N+\sigma$ we can write
    \eqn{
    u_{k-1}(t)&=\exp(N_{k-1}^2(t_0-t))u_{k-1}(t_0)+N_{k-1}^\alpha\int_t^{t_0}\exp(-\int_t^{t'}b_k)u_k^2(t')dt'\\
    u_k(t)&=\exp(N_k^2(t_0-t))u_k(t_0)+\int_t^{t_0}\exp(N_k^2(t'-t))(N_{k-1}^\alpha u_{k-1}u_k-N_k^\alpha u_{k+1}^2)dt'.
    }
    With $u,u'\in X^s$ coming from $\mu$ and $\mu'$ respectively, let us define $w_j=(u_j-u_j')/\delta_j$ for all $j\geq0$. We have
    \eqn{
    |w_{k-1}(t)|&\lesssim_\lambda\exp(N_{k-1}^2(t_0-t))\min(|w_{k-1}(t_0)|,\exp(-2N_k^2t_0))\\
    &\quad+N_k^4\int_t^{t_0}\exp(N_{k-1}^2(t'-t)-2N_k^2(t'))\int_t^{t'}|w_{k-2}(t'')|dt''dt'\\
    &\quad+N_k^2\int_t^{t_0}\exp(N_{k-1}^2(t'-t)-N_k^2t')|w_k(t')|dt'.
    }
    Note that the first term is bounded from above by $|w_{k-1}(t_0)|^\frac12$. Thus
    \eqn{
    \sup_{k}\lambda^{sk}|w_{k-1}(t)|&\lesssim_\lambda \|w(t)\|_s^\frac12+\int_t^{t_0}\int_t^{t'}(t')^{-2}\|w(t'')\|_sdt''dt'+\int_t^{t_0}(t')^{-1}\|w(t')\|_sdt'.
    }
    One carries out a similar computation for $w_k$ and adds the results, once again applying Tonelli's theorem, to find
    \eqn{
    \|w(t)\|_s&\lesssim_\lambda\|w(t_0)\|_s^{t/t_0}+\frac1t\int_t^{t_0}\|w(t')\|_sdt'.
    }
    Finally, Lemma~\ref{gronwalllemma}(ii) lets us conclude continuity of the mapping $u(t_0)/\delta\mapsto u(t)/\delta$ in $X^s$ for each $t>0$.
    
    To summarize, we have shown that $\mu\mapsto u(t)/\delta\in C(M;X^s)$ for all $t>0$. At this point we transfer back to $(r,z)$ coordinates to conclude the proof of (ii). We claim that the mapping $u(t)/\delta\to(r,z)(t)$ is in $C(X^s; (X^s)^2)$. The continuity for $r$ is trivial using \eqref{rzdefinition} and the bounds coming from $(r,z)\in B_{\sigma,[0,t_0]}(10)$. For $z$, one can differentiate \eqref{rzdefinition} to find that if $z$ and $z'$ arise from distinct $\mu$'s, then
    \eqn{
    |z_k-z_k'|\leq\left(\frac{|u_{k-1}-u_{k-1}'|}{\delta_k}+|r_k-r_k'|\right)\sup_{\mu\in M}\frac{z_k}{r_k}+\frac{|u_{k}-u_{k}'|}{\delta_k}\sup_{\mu\in M}\left(\frac{\delta_kz_k}{u_k}\right)
    }
    where we have used the definition of $z_k$ to simplify the second supremum. We claim that both the suprema are bounded for each $t$ uniformly in $k$. For the first, it is immediate from $\mathcal H'(0)$ and \eqref{tauconstraint} that $r_k\gtrsim1$ while $z_k=O_\lambda(1)$. For the second, we use the fact from \eqref{ufromrandz} that $\delta_kz_k/u_k=r_k^{-1}z_k/R_3(z_k)=(2r_k)^{-1}(z_k^2+1)=O_\lambda(1)$ from which we conclude.

    For (iii), observe directly from the system for $r,z$ and the fact that they are in $B_{\sigma,[0,t_0]}(\eps)$ that
    \eqn{
    |\dot r_k|+|\dot z_k|\lesssim_\lambda N_k^2 e^{-N_k^2t}
    }
    for all $t\in[0,t_0]$. It follows that
    \eqn{
    \|r(t)-r^0\|_s+\|z(t)-z^0\|_s\lesssim_\lambda\sup_{k\geq1}\lambda^{sk}(1-e^{-N_k^2t})\lesssim_s (N_0^2t)^{-\frac s2}.
    }
    The result follows because $s<0$.
\end{proof}

\bibliographystyle{abbrv}
\bibliography{literature}

\end{document}